\setlist[enumerate]{leftmargin=7mm,topsep=0pt,itemsep=-1ex,partopsep=1ex,parsep=1ex,label=\rm{(\roman*)}}
\setlist[itemize]{label=\raisebox{0.25ex}{\tiny$\bullet$}}
\theoremstyle{plain}
\newtheorem{theorem}{Theorem}[section]
\newtheorem*{theoremaux}{Theorem \theoremauxnum}
\gdef\theoremauxnum{1}
\newtheorem{proposition}[theorem]{Proposition}
\newtheorem*{propositionaux}{Proposition \propositionauxnum}
\gdef\propositionauxnum{1}
\newtheorem{lemma}[theorem]{Lemma}
\newtheorem*{lemmaaux}{Lemma \lemmaauxnum}
\gdef\lemmaauxnum{1}
\newtheorem{corollary}[theorem]{Corollary}
\theoremstyle{definition}
\newtheorem{definition}[theorem]{Definition}
\newtheorem{notation}[theorem]{Notation}
\newtheorem{example}[theorem]{Example}
\theoremstyle{remark}
\newtheorem{remark}[theorem]{Remark}
\newcommand{\leftexp}[2]{{\vphantom{#2}}^{#1}{#2}}
\newcommand{\incl}[1][r]{\ar@<-0.2pc>@{^(-}[#1] \ar@<+0.2pc>@{-}[#1]}
\newcommand{\hs}{\kern 0.8pt}
\newcommand{\Spec}{\mathrm{Spec}}
\newcommand{\inn}{\mathrm{inn}}
\newcommand{\V}{\mathcal{V}}
\renewcommand{\gg}{\mathfrak{g}}
\newcommand{\h}{\mathfrak{h}}
\newcommand{\iso}{\simeq}
\newcommand{\C}{\mathbb{C}}
\newcommand{\Z}{\mathbb{Z}}
\renewcommand{\l}{\mathfrak{l}}
\newcommand{\R}{\mathbb{R}}
\newcommand{\G}{\mathbb{G}}
\DeclareMathOperator{\SL}{SL}
\DeclareMathOperator{\SU}{SU}
\DeclareMathOperator{\Sp}{Sp}
\DeclareMathOperator{\Aut}{Aut}
\DeclareMathOperator{\Gal}{Gal}
\DeclareMathOperator{\GL}{GL}
\title[Real structures on symmetric spaces]{Real structures on symmetric spaces}
\author[Lucy Moser-Jauslin and Ronan Terpereau]{Lucy Moser-Jauslin and Ronan Terpereau}
\thanks{The second-named author is supported by the Project FIBALGA ANR-18-CE40-0003-01.
This work received partial support from the French "Investissements d\textquoteright Avenir" program and from project ISITE-BFC (contract ANR-lS-IDEX-OOOB).
The IMB receives support from the EIPHI Graduate School (contract ANR-17-EURE-0002)}
\address{Institut de Math\'{e}matiques de Bourgogne, UMR 5584 CNRS, Universit\'{e} Bourgogne Franche-Comt\'{e}, F-21000 Dijon, France}
\email{lucy.moser-jauslin@u-bourgogne.fr}
\address{Institut de Math\'{e}matiques de Bourgogne, UMR 5584 CNRS, Universit\'{e} Bourgogne Franche-Comt\'{e}, F-21000 Dijon, France}
\email{ronan.terpereau@u-bourgogne.fr}
\keywords{Symmetric space, homogeneous space, real structure, real form}
\subjclass[2010]{%
  14M27
, 14M17
, 20G20
, 11E72
, 14P99
}
\def\ga{\,^\gamma\hskip-1pt}
\begin{document}

\begin{abstract}
We obtain a necessary and sufficient condition for the existence of  equivariant real structures on complex symmetric spaces for semisimple algebraic groups and discuss how to determine the number of equivalence classes for such structures. 
\end{abstract}

\maketitle

\tableofcontents

\section*{Introduction}

A (complex algebraic) \emph{symmetric space} is a complex algebraic $G$-variety $X=G/H$, where $G$ is a complex reductive algebraic group, $\theta \in \Aut_\C(G)$ is a non-trivial group involution, and $H \subseteq G$ is a subgroup such that $(G^\theta)^0 \subseteq H \subseteq N_G(G^\theta)$. The historical motivation for the study of symmetric spaces comes from the \emph{Riemannian symmetric spaces} (see \cite{Hel78} for an exposition); those arise in a wide range of situations in both mathematics and physics, and local models are given by the real loci of certain (complex algebraic) symmetric spaces. Therefore, given a symmetric space, it is natural to ask whether it admits \emph{equivariant real structures} (see \S~\ref{subsec:equiv real structures}). The present note aims at providing an answer to this question. In this article, we restrict to the case where $G$ is semisimple  and simply-connected (see Rk.~\ref{rk:Gsemsimple}); in particular, by \cite[\S~8]{Ste68} this implies that $G^\theta$ is connected, i.e. that $(G^\theta)^0=G^\theta$.

A homogeneous space $G/H$ is \emph{spherical} if a Borel subgroup of $G$  acts with an open dense orbit; see \cite{Kno91,Tim11} for an exposition of the theory of spherical homogeneous spaces and their equivariant embeddings. Spherical homogeneous spaces are classified in terms of combinatorial data called \emph{homogeneous spherical data} \cite[\S~30.11]{Tim11}. By Vust \cite{Vus74} symmetric spaces are spherical, thus symmetric spaces are also classified by the homogeneous spherical data. However, these data are quite complicated to handle and also, those corresponding to symmetric spaces have no particular features to distinguish them from those corresponding to non-symmetric spaces. Using these data, a criterion for the existence of equivariant real structures on general spherical homogeneous spaces was obtained by Borovoi and Gagliardi in \cite{BG} (generalizing results of Akhiezer and Cupit-Foutou in \cite{ACF14,Akh15,CF15}). However, this criterion can be difficult to apply in specific cases. In particular, in the case where the spherical homogeneous space is a symmetric space, the involution $\theta$ is not used. 
The leading goal of this article is  to  obtain an independent practical criterion  using the involution $\theta$.

Our main result in this note is the following (Th.~\ref{th:main result existence} and Prop.~\ref{prop:number of structures}):

\begin{theorem} \label{th: main th}
Let $(G,\sigma)$ be a complex simply-connected semisimple algebraic group with a real group structure. 
Let $\theta$ be a (non-trivial regular) group involution on $G$, and let $G^\theta \subseteq H \subseteq N_G(G^\theta)$ be a symmetric subgroup. Then there exists a $(G,\sigma)$-equivariant real structure on the symmetric space $G/H$ if and only if the following holds:
\begin{itemize}[leftmargin=4mm]
\item the involutions $\sigma \circ \theta \circ \sigma$ and $\theta$ are conjugate by an inner automorphism;
\item the $\Z/2\Z$-action on $N_G(G^\theta)/G^\theta$ induced by $\sigma$ (see Def.~\ref{def:Gamma-action}) stabilizes $H/G^\theta$; and
  \item $\Delta_H(\sigma)=0$, where $\Delta_H$ is the map defined by \eqref{eq:map Delta} at the end of \S~\ref{subsec1}.

\end{itemize}
Moreover, if such a structure exists, then there are exactly $2^n$ equivalence classes of $(G,\sigma)$-equivariant real structures on $G/H$, where $n$ is a non-negative integer than can be calculated explicitly (see \S~\ref{sec:number of eq classes} for details).
\end{theorem}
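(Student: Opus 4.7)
The plan is to parametrize $(G,\sigma)$-equivariant real structures on $X = G/H$ by elements of $G$. A $\sigma$-equivariant antiholomorphic map $\mu \colon X \to X$ is determined by $\mu(eH) = g_0 H$ for some $g_0 \in G$ and takes the form $\mu(gH) = \sigma(g) g_0 H$. The requirements that $\mu$ be well-defined and involutive translate into the group-theoretic conditions $g_0^{-1} \sigma(H) g_0 = H$ and $\sigma(g_0) g_0 \in H$, respectively. My goal is to show that the three bulleted conditions in the theorem correspond exactly to the solvability of this system (up to modifying $g_0$ within its $H$-coset, which leaves $\mu$ unchanged).

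First I would restrict the well-definedness condition to $G^\theta \subseteq H$. Since $\sigma(G^\theta) = G^{\sigma\theta\sigma}$, the equality $g_0 G^\theta g_0^{-1} = \sigma(G^\theta)$, combined with the fact that for semisimple $G$ a regular involution is determined by its fixed subgroup, is equivalent to $\sigma\theta\sigma = \Inn(g_0) \circ \theta \circ \Inn(g_0)^{-1}$, producing the first bulleted condition. Given such a $g_0$, the assignment $hG^\theta \mapsto g_0^{-1}\sigma(h)g_0\, G^\theta$ descends to a well-defined involution on the finite quotient $N_G(G^\theta)/G^\theta$ (independent of the choice of $g_0$ modulo $N_G(G^\theta)$), which is the $\Z/2\Z$-action of Def.~\ref{def:Gamma-action}; the full condition $g_0^{-1}\sigma(H) g_0 = H$ then amounts to this action stabilizing $H/G^\theta$, yielding the second bulleted condition.

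Assuming the first two conditions, the remaining question is whether $g_0$ can be modified within $N_G(G^\theta) \cdot g_0$ (without destroying the previous conditions) so that in addition $\sigma(g_0) g_0 \in H$. Since $\sigma(g_0) g_0 \in N_G(G^\theta)$ automatically once the well-definedness holds, its image in a suitable quotient defines an obstruction class depending only on $\sigma$ and $H$: this is the content of the map $\Delta_H$ constructed in \S\ref{subsec1}. A direct cocycle computation shows that the obstruction vanishes iff $\Delta_H(\sigma) = 0$, which is the third bulleted condition and completes the ``iff'' statement.

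For the count (Prop.~\ref{prop:number of structures}), once existence is established, two equivariant real structures $\mu_{g_0}, \mu_{g_1}$ are equivalent iff $g_1$ and $g_0$ differ by a $\sigma$-twisted conjugation by an element of $H$ (or $N_G(H)$, depending on the chosen notion of equivalence). The set of equivalence classes is then a torsor over a finite elementary abelian $2$-group built from the $\Z/2\Z$-invariants of $H/G^\theta$ and an associated cohomology group, and its cardinality is $2^n$ with $n$ as computed in \S\ref{sec:number of eq classes}. The main obstacle is step three: rigorously defining $\Delta_H$ and proving that it captures the full obstruction requires a careful tracking of how the freedom in choosing $g_0$ interacts with the formula $\sigma(g_0) g_0$ modulo $H$, a Galois-cohomology-flavored calculation carried out in $N_G(G^\theta)/G^\theta$ equipped with its twisted $\Z/2\Z$-action.
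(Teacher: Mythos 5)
Your overall strategy --- encode a putative structure as $\mu(gH)=\sigma(g)g_0H$, reduce to the two conditions $\sigma(H)=g_0Hg_0^{-1}$ and $\sigma(g_0)g_0\in H$ of Lem.~\ref{lem: two conditions}, translate the first into the first two bullets via the fact that a regular involution of a semisimple group is determined by its fixed subgroup (Prop.~\ref{prop:fixed locus of an involution}), and treat the second as a Galois-cohomological obstruction --- is the same as the paper's. The translation into the first two bullets is essentially right, but note that it silently uses two facts you do not supply: one must pass to the simply connected cover and invoke Steinberg's theorem that $G^\theta$ is then connected, since the step from $\sigma(H)=g_0Hg_0^{-1}$ to $\sigma(G^\theta)=g_0G^\theta g_0^{-1}$ goes through $H^0=G^\theta$ and fails to be automatic when $G^\theta$ is disconnected (Prop.~\ref{prop:H abelian and finite}); and the allowed modifications of $g_0$ form the coset $g_0N_G(H)$, not $N_G(G^\theta)\cdot g_0$ (these agree here only because $N_G(H)=N_G(G^\theta)$ for symmetric $H$, which itself requires an argument).

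The genuine gap is in your third step. The map $\Delta_H$ of \eqref{eq:map Delta}, and the $\Gamma$-action of Def.~\ref{def:Gamma-action} to which the second bullet refers, are only defined after one has produced a quasi-split structure in the inner class of $\sigma$ that \emph{fixes} $H$ (resp.\ $G^\theta$); the cocycle computation you appeal to (Prop.~\ref{prop:coho condition}, i.e.\ \cite[Prop.~2.8]{MJT18}) likewise has $\sigma_{qs}(H)=H$ as a hypothesis. From your first two bullets you only obtain that $\sigma_{qs}(H)$ is \emph{conjugate} to $H$, and upgrading this to ``some conjugate of $H$ is $\sigma_{qs}$-stable'' is not formal: it is Cor.~\ref{cor:H conjugate for symmetric subgroups}, resting on Prop.~\ref{prop:HcongugateH'}, which is the technical heart of the paper and is proved geometrically --- one passes to $Y=G/N_G(H)$ (using $N_G(N_G(H))=N_G(H)$ for symmetric $H$), extends the unique $(G,\sigma_{qs})$-equivariant structure on $Y$ to the wonderful compactification $\overline{Y}$, finds a real point on the closed orbit $G/P$ using quasi-splitness, and uses density of real points on a smooth real variety to drag a fixed point back into the open orbit, whose isotropy yields the desired conjugate. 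No ``direct cocycle computation'' in $N_G(G^\theta)/G^\theta$ substitutes for this; without it $\Delta_H(\sigma)$ is not defined, so the third bullet is not even well posed, and the identification of your ad hoc obstruction class with $\Delta_H(\sigma)$ cannot be carried out. (Your concluding paragraph on the count $2^n$ is a fair sketch of Prop.~\ref{prop:number of structures} together with \cite[Lem.~2.11]{MJT18}.)
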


\begin{remark} \label{rk:Gsemsimple}
The fact that $G$ is assumed to be semisimple, and not just reductive, is crucial for Prop.~\ref{prop:H abelian and finite}, Prop.~\ref{prop:fixed locus of an involution} (see also Rk.~\ref{rk:G not reductive}), and Cor.~\ref{cor:H conjugate for symmetric subgroups}. On the other hand, one can always replace $G$ by its universal covering space to reduce to the case where it is simply-connected.
\end{remark}

\begin{remark}
The fact that symmetric spaces are spherical is used in the proof of Prop. ~\ref{prop:H abelian and finite}, to say that $N_G(H)/H$ is an abelian group, and in the proof of Cor.~\ref{cor:H conjugate for symmetric subgroups}, to apply Prop.~\ref{prop:HcongugateH'} (which is the only result where some knowledge on the theory of equivariant embeddings for spherical homogeneous spaces is required). 
\end{remark}
\begin{remark}
Let $X=G/H$ be a symmetric space with a ($G,\sigma$)-equivariant real structure $\mu$ such that $X(\C)^\mu$ is non-empty. Then $G(\C)^\sigma$ acts on $X(\C)^\mu$ with finitely many orbits and a combinatorial description of these orbits using Galois cohomology is provided in \cite{CFT18} (see also \cite[Chp.~6]{BJ06}).
\end{remark}

In \cite{MJT18} we studied the equivariant real structures on \emph{horospherical varieties} which are another class of spherical varieties. The main result \cite[Th.~0.1]{MJT18} regarding the existence of equivariant real structures on horospherical homogeneous spaces  is quite similar to Th.~\ref{th: main th} but the case of horospherical homogeneous spaces differs greatly from the case of symmetric spaces for the following reasons:
\begin{itemize}[leftmargin=4mm]
\item The homogeneous spherical data corresponding to horospherical homogeneous spaces are easy to discriminate and take a very simple form (see \cite[\S~3.1]{MJT18} for details) contrary to the case of symmetric spaces.
\item The group $\Aut_\C^G(G/H)\iso N_G(H)/H$, which plays a key role when counting the number of equivalence classes of equivariant real structures on $G/H$, is a torus for horospherical homogeneous spaces while it is a finite abelian group for symmetric spaces (Prop.~\ref{prop:H abelian and finite}).
\item In both cases, an equivariant real structure on $G/H$ extends to a $G$-equivariant embedding $G/H \hookrightarrow X$ if and only if the corresponding \emph{colored fan} is stable for the induced action of the Galois group $\Gamma=\Gal(\C/\R)$ (see \cite{Hur11,Wed18}), but in the horospherical case the quotient $X/\Gamma$ is always an algebraic variety while in the symmetric case it can be an algebraic space. Therefore the question of the existence of real forms for symmetric varieties is subtler than for horospherical varieties, and that is the reason why in this note we only consider the homogeneous case.
\end{itemize}

\begin{remark}
A homogeneous space $G/H$ is \emph{horosymmetric} if it is a homogeneous fibration over a flag variety $G/P$, whose fibers are symmetric spaces. This class of spherical homogeneous spaces, which contains both symmetric spaces and horospherical homogeneous spaces, was introduced by Delcroix in \cite{Del20}. It would be interesting to determine a nice criterion for the existence of equivariant real structures on horosymmetric spaces from Th.~\ref{th: main th} and \cite[Th.~0.1]{MJT18}.
\end{remark}


In \S~\ref{sec:gen back on rea structures} we recall some definitions and results on real group structures, equivariant real structures, and symmetric spaces. Then in \S~\ref{sec:existence} we prove the necessary and sufficient condition of Th.~\ref{th: main th} for the existence of equivariant real structures on symmetric spaces (Th.~\ref{th:main result existence}). Finally, in \S~\ref{sec:number of eq classes} we show how to determine  the number of equivalence classes for such structures (Prop.~\ref{prop:number of structures}).

\bigskip

\noindent \textbf{Convention.}
In this article we work over the field of real numbers $\R$ and over the field of complex numbers $\C$.
We denote by $\Gamma$ the Galois group $\Gal(\C/\R)=\{1,\gamma\} \iso \Z/2\Z$.
\textbf{We will always denote by $G$ a complex (connected) simply-connected semisimple algebraic group.} 
We refer the reader to \cite{Hum75} for the standard background on algebraic groups.

\section{General background on real structures and symmetric spaces} \label{sec:gen back on rea structures}
This first section is a short recollection of general results on real structures detailed in \cite[\S\S~1-2]{MJT18} and on symmetric spaces.

\subsection{Real group structures} \label{subsec1}

\begin{definition} 
A \emph{real group structure} on $G$ is a scheme involution $\sigma\colon G \to G$ which makes the diagram 
\[\xymatrix@R=4mm@C=2cm{
    G \ar[rr]^{\sigma} \ar[d]  && G \ar[d] \\
    \Spec(\C)  \ar[rr]^{\Spec(z \mapsto \overline{z})} && \Spec(\C)  
  }\]
commute, and such that the induced morphism $\gamma_*G \to G$ is an isomorphism of complex algebraic groups, where $\gamma_*G \to \Spec(\C)$ is the base change of $G \to \Spec(\C)$ along the morphism at the bottom of the square (see \cite[\S~2.5]{BG}).  
  
  Two real group structures $\sigma$ and $\sigma'$ on $G$ are \emph{equivalent} if there exists a (regular) group automorphism $\varphi \in \Aut_{gr}(G$) such that $\sigma'=\varphi \circ \sigma \circ \varphi^{-1}$.
\end{definition}

\begin{remark}
The \emph{real locus} $G(\C)^\sigma$ of $(G,\sigma)$ is a real Lie group.
\end{remark}

The next result shows that the classification of the real group structures on complex simply-connected semisimple algebraic groups reduces to the simple groups case.

\begin{lemma} \label{lem:easy_lemma_reduction}  \emph{(\cite[Lem.~1.7]{MJT18})}
Let $G=\prod_{i \in I} G_i$, where the $G_i$ are the simple factors of $G$, and let $\sigma$ be a real group structure on $G$. 
Then, for a given $i \in I$, we have the following possibilities:
\begin{enumerate}
\item $\sigma(G_i)=G_i$ and $\sigma_{|G_i}$ is a real group structure on $G_i$; or
\item  there exists $j \neq i$ such that $\sigma(G_i)=G_j$, then $G_i \iso G_j$ and $\sigma_{| G_i \times G_j}$ is equivalent to $(g_1,g_2) \mapsto (\sigma_0(g_2),\sigma_0(g_1))$, where $\sigma_0$ is any real group structure on $G_i \iso G_j$.
\end{enumerate}
\end{lemma}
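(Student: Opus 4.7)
My plan is to exploit the product decomposition $G = \prod_{i \in I} G_i$ and analyze how $\sigma$ permutes the simple factors. Any abstract group automorphism of $G$ that preserves Zariski closures, connectedness, and normality must permute the $G_i$, since these are precisely the minimal positive-dimensional connected closed normal subgroups of a simply-connected semisimple group. The antiregular map $\sigma$ satisfies these preservation properties (it is a group homomorphism, and it sends a polynomial equation $f=0$ to $\bar{f}=0$, so it preserves the Zariski topology). Because $\sigma$ is an involution, the induced permutation of $I$ has orbits of size one or two. Orbits of size one immediately yield case~(i): the restriction $\sigma|_{G_i}$ is an antiregular group involution on $G_i$.

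For an orbit $\{i,j\}$ of size two, set $f := \sigma|_{G_i} : G_i \to G_j$, an antiregular isomorphism. Composing with any real group structure on $G_j$ --- which exists since every complex simple group admits a compact real form --- produces a regular isomorphism $G_i \to G_j$, establishing $G_i \iso G_j$. The relation $\sigma^2 = \mathrm{id}$ then forces $\sigma|_{G_j} = f^{-1}$, so on $G_i \times G_j$ the structure reads $\sigma(g_1, g_2) = (f^{-1}(g_2), f(g_1))$.

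The main step is to show that, after identifying $G_j$ with $G_i$ via a regular isomorphism, this structure is equivalent to $(g_1, g_2) \mapsto (\sigma_0(g_2), \sigma_0(g_1))$ for any prescribed real group structure $\sigma_0$ on $G_i$. I would conjugate by $\varphi(g_1, g_2) := (g_1, \alpha(g_2))$ with $\alpha \in \Aut_\C(G_i)$ a regular automorphism to be determined. A short calculation yields
\[
\varphi \circ \sigma \circ \varphi^{-1}(g_1, g_2) = \bigl(f^{-1} \circ \alpha^{-1}(g_2),\, \alpha \circ f(g_1)\bigr),
\]
so one requires $\alpha \circ f = \sigma_0$ and $f^{-1} \circ \alpha^{-1} = \sigma_0$. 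The first determines $\alpha := \sigma_0 \circ f^{-1}$, which is regular as a composition of two antiregular maps, and substitution into the second reduces to $\sigma_0^2 = \mathrm{id}$, which holds automatically. The only mild technical point is the bookkeeping around the identification $G_j \iso G_i$; once that is fixed, the verification is purely formal.
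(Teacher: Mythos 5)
Your argument is correct and follows essentially the same route as the source: the paper itself only cites \cite[Lem.~1.7]{MJT18} here, but its proof of the analogous statement for regular involutions (Lemma~\ref{lem:easy_lemma_reduction2}) proceeds exactly as you do --- the factors are permuted since they are the unique simple normal subgroups, an involution gives orbits of size one or two, and on a size-two orbit one writes the restriction as $(g_1,g_2)\mapsto(\psi(g_2),\psi^{-1}(g_1))$ and conjugates by an automorphism twisting one coordinate. Your choice $\alpha=\sigma_0\circ f^{-1}$ (regular, being a composite of two antiregular maps) correctly handles the extra antiregularity bookkeeping needed in the real-structure case.
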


The real group structures on complex simply-connected simple algebraic groups are well-known (a table can be found in \cite[Appendix~A]{MJT18}); they correspond to real Lie algebra structures on complex simple Lie algebras (see \cite[\S~V\!I.10]{Kna02} for the classification of those in terms of diagrams).

\begin{definition} \label{def:qs real str}
 If there exists a Borel subgroup $B \subseteq G$ such that $\sigma(B)=B$, then $\sigma$ is called \emph{quasi-split}. For $c \in G$ we denote by $\inn_c$ the inner automorphism of $G$ defined by
 \vspace{-2mm}
\[ \inn_c: G \to G,\ g \mapsto cgc^{-1}.\]
If $\sigma_1$ and $\sigma_2$ are two real group structures on $G$ such that $\sigma_2 = \inn_c \circ \sigma_1$, for some $c \in G$, then $\sigma_2$ is called an \emph{inner twist} of $\sigma_1$.
\end{definition}

A quasi-split real group structure always preserves some maximal torus $T \subseteq B$. 
Moreover, any real group structure on $G$ is equal to the inner twist of a quasi-split real group structure unique up to equivalence \cite[Prop. 7.2.12]{Con14}.

Note that if $\sigma$ is equivalent to a quasi-split real structure $\sigma_{qs}$, then $\sigma$ is also quasi-split. However, the Borel subgroup which is preserved by $\sigma$ is not in general the same as the one preserved by $\sigma_{qs}$.

\begin{example} \label{ex1}
Let $n \geq 2$. There are exactly two inequivalent quasi-split real group structures on $\SL_{2n}$. The first one is  the split real group structure $\sigma_{sp}\colon g \mapsto \overline{g}$, whose real locus is $\SL_{2n}(\R)$, and the second is the real group structure defined by $\sigma_{qs}: g \mapsto K_{n,n} \,{}^{t}(\overline{g})^{-1}\, {}^{t}K_{n,n}$ with $K_{n,n}=\begin{bmatrix}
0& C_n \\ -C_n & 0
\end{bmatrix}$, where  $C_n \in \GL_n(\C)$ is given by $C_n=\begin{bmatrix}
0 & 0 & 1 \\0 & \iddots & 0 \\1 & 0 & 0\end{bmatrix}$. Let us note that ${}^{t}K_{n,n}=K_{n,n}^{-1}=-K_{n,n}$. The real locus of $\sigma_{qs}$ is the real Lie group  $\SU(n,n,\R)$. 
\end{example}

Recall that we denote $\Gamma=\Gal(\C/\R)=\{1,\gamma\} \iso \Z/2\Z$. Let $\sigma_{qs}$ be a quasi-split real group structure on $G$.
We consider the short exact sequence of $\Gamma$-groups
\[1\to Z(G)\to G\to G/Z(G)\to 1,\]
where the $\Gamma$-action is induced by $\sigma_{qs}$. In other words, the element $\gamma\in\Gamma$ acts on $G$ and $Z(G$) by $\sigma_{qs}$, and on $G/Z(G$) by the induced real group structure.
For $\mathcal{G}$ a $\Gamma$-group, let $H^1(\Gamma,\mathcal{G})$ denote the first Galois cohomology pointed set, and for $\mathcal{A}$ an abelian $\Gamma$-group, let $H^2(\Gamma,\mathcal{A})$ denote the second Galois cohomology group; see \cite{Ser02} for more details on Galois cohomology. 
There is a connecting map 
\begin{equation*}
\delta\colon H^1(\Gamma, G/Z(G))\to H^2(\Gamma,Z(G)).
\end{equation*}
Tables where the map $\delta$ is calculated when $G$ is simple can be found in \cite[Appendix~A]{MJT18}. 

\textbf{Let now $H$ be a subgroup of $G$ such that $\sigma_{qs}(H)=H$ and $N_G(H)/H$ is abelian.}
Then  $\sigma_{qs}$ induces a real group structure on $N_G(H)/H$, namely $\widehat{\sigma_{qs}}(nH)=\sigma_{qs}(n)H$, and we can consider the second cohomology group $H^2(\Gamma, N_G(H)/H)$. Since it is abelian, we will use the additive notation for this cohomology group, with the neutral element denoted by $0$. The natural homomorphism $\chi_H:Z(G)\to N_G(H)/H$, induced by the inclusion $Z(G) \to N_G(H$),
yields a homomorphism between the second cohomology groups
\[ \chi_H^*:H^2(\Gamma, Z(G)) \to H^2(\Gamma, N_G(H)/H).\]
In the rest of this article we will denote the composed map $\chi_H^* \circ \delta$ by
\begin{equation} \label{eq:map Delta} \tag{\textasteriskcentered}
\Delta_H: H^1(\Gamma, G/Z(G))  \to H^2(\Gamma, N_G(H)/H).
 \end{equation}
 We recall that $H^1(\Gamma,G/Z(G)) \iso \{c \in G \ | \ c \sigma_{qs}(c) \in Z(G)\}/\equiv$, where $c \equiv c'$ if $c^{-1}b^{-1}c' \sigma_{qs}(b) \in Z(G)$ for some $b \in G$. 

\begin{notation}
If $\sigma$ is a real group structure equivalent to $\inn_c \circ \sigma_{qs}$, then we will write $\Delta_H(\sigma)$ for the element $\Delta_H([c])$ of $H^2((\Gamma, N_G(H)/H)$. (Note that if $\inn_c\circ\sigma_{qs}=\inn_{c'}\circ\sigma_{qs}$, then the classes $[c]$ and $[c']$ are equal.)
\end{notation}
 
The element $\Delta_H(\sigma)  \in H^2(\Gamma, N_G(H)/H)$ is a \emph{cohomological invariant} that plays a key role in our criterion for the existence of equivariant real structures on symmetric spaces; see Th.~\ref{th: main th}.

\subsection{Equivariant real structures} \label{subsec:equiv real structures}
Let $\sigma$ be a real group structure on $G$.

\begin{definition}
A ($G,\sigma$)-\emph{equivariant real structure} on a $G$-variety $X$ is a scheme involution on $X$  such that the diagram 
\[
\xymatrix@R=4mm@C=2cm{
    X \ar[rr]^{\mu} \ar[d]  && X \ar[d] \\
    \Spec(\C)  \ar[rr]^{\Spec(z \mapsto \overline{z})} && \Spec(\C)  
  }
  \]
commutes, and such that 
\[ \forall g \in G, \; \forall x \in X, \;\; \mu(g \cdot x)=\sigma(g) \cdot \mu(x).\]

Two equivariant real structures $\mu$ and $\mu'$ on a ($G,\sigma$)-variety $X$ are \emph{equivalent} if there exists a $G$-automorphism $\varphi \in \Aut_\C^G(X)$ such that $\mu'=\varphi \circ \mu\circ \varphi^{-1}$.
\end{definition}

\begin{lemma} \label{lem: two conditions} \emph{(\cite[Lem.~2.4]{MJT18})}
Let  $X=G/H$ be a homogeneous space. 
Then $X$ has a $(G,\sigma)$-equivariant real structure if and only if there exists $g \in G$ such that these two conditions hold:
\begin{enumerate}[$(1)$]
\item \label{eq: sigma compatible}
\emph{$(G,\sigma)$-compatibility condition:}  $\sigma(H)=gHg^{-1}$
\item \label{eq: involution}
\emph{involution condition:}\hspace{15mm} $\sigma(g)g \in H$
\end{enumerate}
{in which case such a structure $\mu$ on $X$ is given by $\mu(kH)=\sigma(k)gH$ for all $k\in G$.}
\end{lemma}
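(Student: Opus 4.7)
The plan is to exploit the transitive $G$-action on $X=G/H$ and reduce both directions to understanding where an equivariant $\mu$ sends the base point $eH$.

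For the forward implication, suppose $\mu$ is a $(G,\sigma)$-equivariant real structure on $X$. Transitivity gives some $g\in G$ with $\mu(eH)=gH$. Condition~(1) falls out of equivariance applied to stabilizers: the $G$-stabilizer of $eH$ is $H$, so the stabilizer of $\mu(eH)=gH$ must on one hand equal $\sigma(H)$ (by the $\sigma$-equivariance of $\mu$), and on the other hand equal the naive conjugate $gHg^{-1}$, yielding $\sigma(H)=gHg^{-1}$. Condition~(2) is then exactly what the involutivity of $\mu$ demands at the base point: computing $\mu^2(eH)=\mu(gH)=\sigma(g)\cdot\mu(eH)=\sigma(g)gH$ and requiring this to equal $eH$ gives $\sigma(g)g\in H$. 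Once $\mu(eH)=gH$ is known, the $G$-equivariance of $\mu$ forces $\mu(kH)=\sigma(k)gH$ for every $k\in G$, which also establishes the explicit formula at the end of the statement.

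For the converse, given $g\in G$ satisfying (1) and (2), I would take the formula $\mu(kH):=\sigma(k)gH$ as a definition and verify the four required properties in turn. Well-definedness is the only subtle point: if $kH=k'H$, then $k'=kh$ for some $h\in H$, and $\mu(k'H)=\sigma(k)\sigma(h)gH$ equals $\mu(kH)=\sigma(k)gH$ precisely when $g^{-1}\sigma(h)g\in H$, which holds by $\sigma(H)=gHg^{-1}$. The $(G,\sigma)$-equivariance of $\mu$ is then a one-line consequence of $\sigma$ being a group homomorphism; antiregularity is inherited from $\sigma$; and involutivity follows from (2) via $\mu^2(kH)=\sigma(\sigma(k)g)gH=k\sigma(g)gH=kH$.

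The main potential obstacle is really just the well-definedness check and keeping track of which condition is invoked where; neither is deep, but they are the two points where conditions (1) and (2) must be used in a balanced fashion. Conceptually, the heart of the proof is the observation that, because $G$ acts transitively on $X$, an equivariant real structure on $G/H$ is entirely determined by its value at the base point, and the two conditions in the lemma simply record the constraints this single value must satisfy for the resulting map to be a well-defined antiregular involution.
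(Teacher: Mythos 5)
Your proof is correct. The paper itself gives no proof of this lemma (it is quoted from \cite[Lem.~2.4]{MJT18}), and your argument --- reading off $g$ from $\mu(eH)=gH$, extracting the two conditions from the stabilizer computation and from $\mu^2(eH)=eH$, and conversely checking well-definedness, equivariance, antiregularity and involutivity of $kH\mapsto\sigma(k)gH$ --- is exactly the standard argument that the cited reference uses.
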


\begin{remark} \label{rk:N(H)=H}
The first condition implies that $\sigma(g)g \in N_G(H)$, and so if $N_G(H)=H$ the second condition is automatically fulfilled.
\end{remark}

\begin{remark} \label{rk:conjugate} 
If $H'$ is conjugate to $H$, then $G/H$ has a ($G,\sigma$)-equivariant real structure if and only if $G/H'$ has a ($G,\sigma$)-equivariant real structure.
\end{remark}

\begin{proposition} \label{prop:coho condition} \emph{(\cite[Prop.~2.8]{MJT18}, see also \cite[Th.1.6]{BG})}\\
Let $\sigma=\inn_c \circ \sigma_{qs}$, where $c \in G$ and $\sigma_{qs}$ is a quasi-split real groups structure on $G$. Assume that $N_G(H)/H$ is abelian and that $\sigma_{qs}(H)=H$. Then
\begin{enumerate}
\item $G/H$ has a $(G,\sigma_{qs})$-equivariant real structure; and
\item\label{item:ii} $G/H$ has a $(G,\sigma)$-equivariant real structure if and only if $\Delta_H(\sigma)=0$,
\end{enumerate}
where $\Delta_H$ is the map defined by \eqref{eq:map Delta} at the end of  \S~\ref{subsec1}.
\end{proposition}

\begin{remark} Note that, since $\sigma$ defines a real group structure on $G$, we have that $c\sigma_{qs}(c)\in Z(G)$.  
Moreover, condition \ref{item:ii} of Proposition~\ref{prop:coho condition} is equivalent to the following condition:
\begin{itemize}
\item[(ii')] $G/H$ has a $(G,\sigma)$-equivariant real structure if and only if there exists $n\in N_G(H)$ such that $n\sigma_{qs}(n)\in c\sigma_{qs}(c)H$.
\end{itemize}
Thus, for example, if $H$ contains the center $Z(G)$, then this condition is trivially satisfied.
\end{remark}

\subsection{Symmetric spaces}
In this section we recall some basic facts on symmetric spaces that we will need in the following. The interested reader is referred to \cite[\S~26]{Tim11} for a detailed survey on symmetric spaces.

\begin{definition}
A subgroup $H \subseteq G$ is \emph{symmetric} if there exists a (non-trivial regular) group involution $\theta$ on $G$ such that $G^\theta \subseteq H \subseteq N_G(G^\theta)$. A homogeneous space $G/H$ is \emph{symmetric} if $H$ is a symmetric subgroup of $G$.
\end{definition}

\begin{example}
The group $G$ itself can be viewed as a symmetric space for the action of $G \times G$ by left and right multiplication. Indeed, $G \iso (G \times G)/H$, where $H=(G \times G)^\theta$ with $\theta(g_1,g_2)=(g_2,g_1)$.
\end{example}

\begin{example} \label{ex2}{
Let $n \geq 2$ and let $G=\SL_{2n}$. Then $\theta : g \mapsto J\,  ({}^{t}g^{-1})\, {}^{t}J$, with $J=\begin{bmatrix}
0 & I_n \\ -I_n & 0
\end{bmatrix}$ and ${}^{t}J=J^{-1}=-J$, is a group involution on $G$. We have $G^\theta=\Sp_{2n}$ and $N_G(G^\theta)=\left \langle Z(G), G^\theta \right \rangle$, and thus $N_G(G^\theta)/G^\theta \iso Z(G)/(Z(G) \cap G^\theta) \iso \Z/n\Z$.}
\end{example}

As for real group structures (see Lem.~\ref{lem:easy_lemma_reduction}), the next result shows that the classification of (regular) group involutions on simply-connected semisimple algebraic groups reduces to the case of simple groups.

\begin{lemma} \label{lem:easy_lemma_reduction2}  
Let $\theta$ be a group involution on $G=\prod_{i \in I} G_i$, where the $G_i$ are the simple factors of $G$. 
Then, for a given $i \in I$, we have the following possibilities:
\begin{enumerate}
\item \label{item: Gi stable} $\theta(G_i)=G_i$ and $\theta_{|G_i}$ is a group involution on $G_i$; or
\item there exists $j \neq i$ such that $\theta(G_i)=G_j$, then $G_i \iso G_j$ and $\theta_{| G_i \times G_j}$ is conjugate to $(g_1,g_2) \mapsto (g_2,g_1)$.
\end{enumerate}
\end{lemma}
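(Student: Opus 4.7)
The plan is to follow the same template as Lem.~\ref{lem:easy_lemma_reduction}, simply replacing the antiregular real group structure by a regular involution throughout; the argument is purely formal and does not use anything specific about (anti)regularity.

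First I would observe that since $G=\prod_{i\in I}G_i$ is a simply-connected semisimple group, the simple factors $G_i$ admit a canonical characterization (e.g.~as the minimal non-trivial closed connected normal subgroups of $G$), and therefore any regular group automorphism of $G$ permutes the collection $\{G_i\}_{i\in I}$. In particular, $\theta$ induces a permutation $\pi_\theta$ of the index set $I$, and the hypothesis $\theta^2=\mathrm{id}$ forces $\pi_\theta^2=\mathrm{id}$. Thus the orbits of $\pi_\theta$ have length $1$ or $2$, which already yields the dichotomy in the statement: case \ref{item: Gi stable} corresponds to fixed points of $\pi_\theta$, and case (ii) to $2$-cycles.

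In case \ref{item: Gi stable} there is nothing to prove, as $\theta(G_i)=G_i$ and the restriction of an involution to a stable subgroup is again an involution. In case (ii), $\theta$ restricts to a group isomorphism $\alpha:=\theta|_{G_i}\colon G_i\isoto G_j$, and because $\theta^2=\mathrm{id}$ its inverse coincides with $\theta|_{G_j}\colon G_j\isoto G_i$; in particular $G_i\iso G_j$ and the restriction of $\theta$ to $G_i\times G_j$ is explicitly
\[
\theta(g_1,g_2)=(\alpha^{-1}(g_2),\,\alpha(g_1)).
\]
To obtain the conjugacy with the swap, I would introduce the regular automorphism
\[
\varphi\colon G_i\times G_j\isoto G_i\times G_i,\qquad (g_1,g_2)\mapsto(g_1,\alpha^{-1}(g_2)),
\]
and check by a one-line computation that $\varphi\circ\theta\circ\varphi^{-1}$ sends $(g_1,g_2')\in G_i\times G_i$ to $(g_2',g_1)$, i.e.~it is the swap on $G_i\times G_i$.

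There is no substantial obstacle in this argument: the only non-cosmetic step is writing down the conjugating automorphism $\varphi$ in case (ii), which is a routine transcription of the corresponding step in the proof of Lem.~\ref{lem:easy_lemma_reduction}. The main thing to be careful about is the initial remark that a regular automorphism of a simply-connected semisimple group must permute the simple factors; once that is taken for granted, everything else is immediate.
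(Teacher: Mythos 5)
Your proposal is correct and follows essentially the same route as the paper's proof: both rest on the fact that an automorphism of a simply-connected semisimple group permutes the simple factors, and both handle the swapped case by writing down an explicit conjugating automorphism (your $\varphi$ built from $\alpha=\theta|_{G_i}$ plays exactly the role of the paper's map $(h_1,h_2)\mapsto(\psi(h_2),h_1)$). The only difference is cosmetic: the paper first identifies $G_i\times G_j$ with $H\times H$ before conjugating, whereas you work directly with $\alpha$.
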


\begin{proof}
We use the fact that the factors $G_i$ are the unique simple normal subgroups of $G$ (see \cite[Th.~5.1.19] {Con14}). In particular, any group automorphism of $G$ permutes the factors. Since $\theta$ is a group involution,
 either $\theta(G_i)=G_i$ and we get \ref{item: Gi stable}, or $\theta(G_i)=G_j$ for some $j \neq i$. In the second case,  $G_i$  and $G_j$ are then isomorphic. 
 Therefore
 $G_i \times G_j \iso H \times H$, for some simply-connected simple algebraic group $H$, and $\theta_{|G_i \times G_j}$ identifies with $
 \theta_{H \times H}: (h_1,h_2) \mapsto (\psi(h_2),\psi^{-1}(h_1))$ for some group automorphism $\psi$ on $H$. But
 then it suffices to conjugate $\theta_{H \times H}$ with the group automorphism defined by  $(h_1,h_2) \mapsto (\psi(h_2),h_1)$ to get the involution $(g_1,g_2) \mapsto (g_2,g_1)$.
\end{proof}

Conjugacy classes of (regular) group involutions on simply-connected simple algebraic groups can be classified by using either Kac diagrams or Satake diagrams; see \cite[\S~26.5]{Tim11} for more details on these classifications and \cite[Table~26.3]{Tim11} for the list of conjugacy classes of (regular) group involutions on simply-connected simple algebraic groups.

\begin{proposition} \label{prop:H abelian and finite}
For any symmetric subgroup $G^\theta \subseteq H \subseteq N_G(G^\theta)$, the quotient group $N_G(H)/H$ is abelian and finite; in particular, $G^\theta=H^0=N_G(H)^0=N_G(G^\theta)^0$.
\end{proposition}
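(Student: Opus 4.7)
The plan is to treat the three assertions in turn, working outward from $(G^\theta)^0$ to $N_G(H)$: (a) $G^\theta$ connected when $G$ is simply-connected; (b) finiteness of $N_G(H)/H$; (c) the abelian property. For (a), I invoke Steinberg's theorem: the fixed-point subgroup of a semisimple (in particular finite-order) automorphism of a simply-connected semisimple algebraic group is connected.

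For (b), the key intermediate step is the infinitesimal identification $N_{\mathfrak{g}}(\mathfrak{g}^\theta) = \mathfrak{g}^\theta$. Since $\mathfrak{g}^\theta$ is $\theta$-stable, so is $N_{\mathfrak{g}}(\mathfrak{g}^\theta)$; decomposing $\mathfrak{g} = \mathfrak{g}^\theta \oplus \mathfrak{g}^{-\theta}$ into $d\theta$-eigenspaces, any $X = X_+ + X_- \in N_{\mathfrak{g}}(\mathfrak{g}^\theta)$ satisfies $[X_-, \mathfrak{g}^\theta] \subseteq \mathfrak{g}^\theta \cap \mathfrak{g}^{-\theta} = 0$, because $[\mathfrak{g}^{-\theta}, \mathfrak{g}^\theta] \subseteq \mathfrak{g}^{-\theta}$. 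Hence $X_-$ lies in $(\mathfrak{g}^{-\theta})^{\mathfrak{g}^\theta}$, which vanishes for a semisimple symmetric pair since the isotropy representation of a semisimple symmetric pair has no trivial subrepresentation. This yields $N_{\mathfrak{g}}(\mathfrak{g}^\theta) = \mathfrak{g}^\theta$, hence $N_G((G^\theta)^0)^0 = (G^\theta)^0$ and $N_G((G^\theta)^0)/(G^\theta)^0$ is finite. From $(G^\theta)^0 \subseteq H^0 \subseteq N_G(G^\theta)^0 \subseteq N_G((G^\theta)^0)^0 = (G^\theta)^0$ I deduce $H^0 = (G^\theta)^0$. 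Then $N_G(H) \subseteq N_G(H^0) = N_G((G^\theta)^0)$, so $N_G(H)/H$ is a subquotient of the finite group $N_G((G^\theta)^0)/(G^\theta)^0$, and is therefore finite. When $G$ is simply-connected, (a) makes the chain collapse to $G^\theta = H^0 = N_G(H)^0 = N_G(G^\theta)^0$.

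For (c), I invoke that symmetric spaces are spherical (Vust, cited in the introduction) together with the classical fact that for any spherical homogeneous space $G/H$, the group $\Aut_\C^G(G/H) \iso N_G(H)/H$ is diagonalizable (see, e.g., \cite{Tim11}). A finite diagonalizable group is automatically abelian, so combined with (b) this concludes the proof.

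The main obstacle is the vanishing of $\mathfrak{g}^\theta$-invariants in $\mathfrak{g}^{-\theta}$, which fails when $G$ is merely reductive (central elements of $\mathfrak{g}$ may lie in $\mathfrak{g}^{-\theta}$ and produce non-trivial invariants); this is exactly why the semisimplicity hypothesis on $G$ is flagged in Rk.~\ref{rk:Gsemsimple}. A secondary care point is the distinction between $N_G(G^\theta)$ and $N_G((G^\theta)^0)$ when $G$ is not simply-connected, handled above by routing the comparison through the latter group.
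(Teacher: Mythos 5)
Your proposal is correct, and it follows the paper for two of the three assertions while replacing the third by a genuinely self-contained argument. For the connectedness of $G^\theta$ you invoke Steinberg, exactly as the paper does; for abelianness you combine Vust's sphericity of symmetric subgroups with the diagonalizability of $N_G(H)/H$ for spherical $H$, which is the same route as the paper's citation of Brion--Pauer and Knop. The divergence is the finiteness of $N_G(H)/H$: the paper simply cites De Concini--Procesi, whereas you derive it from the infinitesimal identity $N_{\mathfrak{g}}(\mathfrak{g}^\theta)=\mathfrak{g}^\theta$ and the chain $(G^\theta)^0\subseteq H^0\subseteq N_G(G^\theta)^0\subseteq N_G((G^\theta)^0)^0=(G^\theta)^0$. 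Your reduction to the vanishing of $(\mathfrak{g}^{-\theta})^{\mathfrak{g}^\theta}$ is correct, and the one fact you leave as a black box is standard; it is worth noting it has a one-line proof via the Killing form $B$: if $X\in\mathfrak{g}^{-\theta}$ centralizes $\mathfrak{g}^\theta$, then for $Y\in\mathfrak{g}^{-\theta}$ and $Z\in\mathfrak{g}^\theta$ one has $B([X,Y],Z)=-B(Y,[X,Z])=0$, so $[X,\mathfrak{g}^{-\theta}]=0$ by non-degeneracy of $B$ on $\mathfrak{g}^\theta$, hence $X\in Z(\mathfrak{g})=0$. What your route buys is transparency: it isolates exactly where semisimplicity is used (consistent with Rk.~\ref{rk:Gsemsimple}), it establishes finiteness for arbitrary semisimple $G$ without first passing through the simply-connected case, and it makes the equalities of identity components explicit rather than implicit in a reference; the cost is only length relative to the paper's one-sentence citation.
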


\begin{proof}
Symmetric subgroups are spherical (see \cite{Vus74} or \cite[Th.~26.14]{Tim11}), and if $H$ is a spherical subgroup of $G$, then $N_G(H)/H$ is abelian (see \cite[\S~5.2]{BP87} or \cite[Th.~6.1]{Kno91}). We already mentioned the connectedness of $G^\theta$ in the introduction (as $G$ is semisimple and simply-connected), and the finiteness of $N_G(H)/H$ then follows from the work of De Concini and Procesi in \cite[\S~1.7]{DCP83} (see also \cite[\S~2.2]{Vus90}).   
\end{proof}

\section{Existence of equivariant real structures on symmetric spaces} \label{sec:existence}
In this section we will always denote by $\theta$ a (regular non-trivial) group involution on $G$, by $\sigma_{qs}$ a quasi-split real group structure on $G$, and by $\sigma=\inn_c \circ \sigma_{qs}$ a real group structure on $G$ obtained as an inner twist of $\sigma_{qs}$. 

\begin{notation}
To simplify the notation we will denote $\leftexp{\psi}\theta=\psi \circ \theta \circ \psi^{-1}$ for any (regular or antiregular) group automorphism $\psi$. Also, if $\theta_1$ and $\theta_2$ are two group involutions on $G$, we will write $\theta_1\sim\theta_2$ when they are conjugate by an inner automorphism of $G$.
\end{notation}

Example~\ref{ex2.2} shows that the combinatorial invariants of the conjugacy class of $\theta$ (such as  Kac diagrams or Satake diagrams) are too coarse to determine the existence of a $(G,\sigma)$-equivariant real structure on the symmetric space $G/G^\theta$.

\begin{example}\label{ex2.2}
Let $G=\SL_n^{\times 3}$ with $n \geq 2$, and let $\sigma\colon (g_1,g_2,g_3)\mapsto (\overline{g_2}, \overline{g_1},\leftexp{t}{\overline{g_3}^{-1}})$ be a real group structure on $G$. We give an example of two group involutions $\theta$ and $\theta'$ that are conjugate (by an outer automorphism of $G$) such that $G/G^\theta$ admits a $(G,\sigma)$-equivariant real structure but $G/G^{\theta'}$ does not. 
Let $\theta\colon (g_1,g_2,g_3) \mapsto (g_2,g_1,\leftexp{t}{g}_{3}^{-1})$, let $\psi\colon(g_1,g_2,g_3) \mapsto (g_3,g_2,g_1)$, and let  $\theta'=\leftexp{\psi}\theta$. Then $\sigma(G^\theta)=G^\theta$ while $\sigma(G^{\theta'})$ is not conjugate to $G^{\theta'}$ in $G$, and we conclude with Lem.~\ref{lem: two conditions}.
\end{example}

Therefore, a criterion for the existence of a $(G,\sigma)$-equivariant real structure on $G/G^\theta$ should depend on $\theta$ up to a  conjugate by an \textbf{inner} automorphism of $G$, which is indeed the case in Th.~\ref{th: main th}. 

\smallskip

The next result is well-known to specialists but we give a proof because of a lack of suitable reference. We thank Michael Bulois for indicating us the sketch of the proof. 

\begin{proposition} \label{prop:fixed locus of an involution}
The group involution $\theta$ on the simply-connected semisimple algebraic group $G$ is uniquely determined by its fixed locus $G^\theta$.
\end{proposition}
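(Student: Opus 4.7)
The plan is to reduce the statement to a Lie algebra calculation using the Killing form. Suppose $\theta_1$ and $\theta_2$ are two group involutions with $G^{\theta_1}=G^{\theta_2}=:K$. Since $G$ is connected (by the conventions in the introduction), a regular group automorphism is determined by its differential at the identity. It therefore suffices to show that $d\theta_1=d\theta_2$ on $\mathfrak{g}:=\operatorname{Lie}(G)$.

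Working in characteristic zero, the Lie algebra $\mathfrak{k}:=\operatorname{Lie}(K)$ coincides with the $+1$-eigenspace $\mathfrak{g}^{d\theta_i}$ for $i=1,2$. Writing $\mathfrak{g}=\mathfrak{k}\oplus\mathfrak{p}_i$ for the eigenspace decomposition of $d\theta_i$ (with $\mathfrak{p}_i$ the $-1$-eigenspace), I would show that $\mathfrak{p}_1=\mathfrak{p}_2$, which will force $d\theta_1=d\theta_2$. The key idea is to identify $\mathfrak{p}_i$ intrinsically from $\mathfrak{k}$.

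Let $B$ denote the Killing form of $\mathfrak{g}$. Since $d\theta_i$ is a Lie algebra automorphism, it preserves $B$. Hence for $x\in\mathfrak{k}$ and $y\in\mathfrak{p}_i$ one has
\[
B(x,y)=B\bigl(d\theta_i(x),d\theta_i(y)\bigr)=B(x,-y)=-B(x,y),
\]
so $B(x,y)=0$, i.e.\ $\mathfrak{p}_i\subseteq \mathfrak{k}^{\perp_B}$. Because $G$ is semisimple, $B$ is non-degenerate, so $\dim \mathfrak{k}^{\perp_B}=\dim\mathfrak{g}-\dim\mathfrak{k}=\dim\mathfrak{p}_i$; thus $\mathfrak{p}_i=\mathfrak{k}^{\perp_B}$. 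In particular $\mathfrak{p}_1=\mathfrak{p}_2$, giving $d\theta_1=d\theta_2$ and hence $\theta_1=\theta_2$.

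I do not foresee a serious obstacle. The only point that requires a little care is the identification $\operatorname{Lie}(G^{\theta})=\mathfrak{g}^{d\theta}$, which holds for the fixed locus of a finite-order automorphism of a reductive group in characteristic zero and can be cited from standard references (for instance, Steinberg's work quoted already in the proof of Prop.~\ref{prop:H abelian and finite}). The non-degeneracy of the Killing form — which is precisely where the semisimplicity of $G$ enters in an essential way (compare Rk.~\ref{rk:Gsemsimple}) — does the rest.
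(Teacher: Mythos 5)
Your proof is correct, and it takes a genuinely different and substantially more economical route than the paper's. The paper reduces to the simply connected case, passes to the Lie algebra, splits $\gg$ into simple factors, and then argues separately for inner involutions (where $\gg^\Theta$ contains a Cartan subalgebra and $\gg_1$ is recovered as a sum of root spaces) and for outer involutions (where it invokes Kac's irreducibility of $\gg_1$ as a $\gg_0$-module and a case-by-case check against the classification table to rule out $\gg_0$ containing a summand isomorphic to $\gg_1$). Your observation that the $(-1)$-eigenspace is forced to be the Killing-orthogonal complement $\mathfrak{k}^{\perp_B}$ — since $\Theta$ preserves $B$, the eigenspaces are $B$-orthogonal, and non-degeneracy of $B$ plus a dimension count pins down $\mathfrak{p}_i$ — replaces all of this with one uniform linear-algebra argument: it needs no reduction to simple factors, no inner/outer dichotomy, and no classification table. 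The two supporting facts you flag are indeed the only ones needed and are standard in characteristic zero: $\operatorname{Lie}(G^\theta)=\gg^{d\theta}$ (smoothness of the fixed-point scheme of an involution, by linear reductivity of $\Z/2\Z$; note Steinberg's result cited in the paper is about connectedness, so you may prefer a reference for smoothness of fixed loci), and the fact that a homomorphism out of a connected group in characteristic zero is determined by its differential. What your approach buys is brevity and uniformity; what the paper's approach buys, arguably, is finer structural information about how $\gg_1$ sits as a $\gg_0$-module, which is not needed for the statement.
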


\begin{proof}
As group involutions on simply-connected semisimple algebraic groups correspond to Lie algebra involutions on semisimple Lie algebras (see \cite[\S~4.3.4]{Pro07}), it suffices to prove that if {$\Theta(=D_e\theta)$} is a Lie algebra involution on $\gg=Lie(G)$, then $\gg^\Theta$ determines $\Theta$. 

Since $\gg$ is semisimple, it identifies with a direct sum of simple Lie algebras $\gg=\bigoplus_{i \in I} \gg_i$. Let $\l_i=\{(0,\ldots,0,*,0,\ldots,0)\} \iso \gg_i$ be the Lie subalgebra of $\gg$ formed by elements whose all coordinates but the $i$-th vanish. Any Lie algebra automorphism of $\gg$ permutes the $\l_i$. 
Hence, either $\Theta(\l_i)=\l_i$ and $\Theta_{|\l_i}$ is a Lie algebra involution on $\l_i$ or $\Theta(\l_i)=\l_j$ for some $i \neq j$. 
We have $\l_i \cap \gg^\Theta \neq \{0\}$ if and only if $\Theta(\l_i)=\l_i$, and so $\gg^\Theta$ determines the set of indices $I_0=\{ i \in I,\ \Theta(\l_i)=\l_i \}$. 
Moreover, for all $i,j \in I \setminus I_0$, we have $(\l_i+\l_j) \cap \gg^\Theta \neq 0$ if and only if $\Theta(\l_i)=\l_j$, and so $\gg^\Theta$ also determines the pairs of indices corresponding to the $\l_i$ that are switched by $\Theta$.  

Let $i < j$ such that $\Theta(\l_i)=\l_j$. Then 
\[(\l_i + \l_j) \cap \gg^\Theta=\{(0,\ldots,0,\varphi(z),0\ldots,0,z,0,\ldots,0)\},\] 
where $\varphi$ is some Lie algebra isomorphism $\l_j \iso \l_i$, and 
\small
\[
\begin{array}{cccc}
\Theta_{| \l_i + \l_j}  \colon & \l_i+\l_j & \to & \l_i+\l_j \\
  & (0,\ldots,0,x,0\ldots,0,y,0,\ldots,0) & \mapsto & (0,\ldots,0,\varphi(y),0\ldots,0,\varphi^{-1}(x),0,\ldots,0).
\end{array}
\]
\normalsize

So it remains only to prove that if $\Theta(\l_i)=\l_i$, then $\l_i^\Theta(=\gg^\Theta \cap \l_i)$ determines $\Theta_{|\l_i}$. Hence, we can assume that $\gg$ is simple. Let $\gg_0=\gg^\Theta$, and let $\gg_1 \subseteq \gg$ be the subspace on which $\Theta$ acts as the scalar $-1$.  We want to show that the Lie subalgebra $\gg_0$ determines the $\gg_0$-submodule $\gg_1$.

If the involution $\Theta$ is an inner automorphism of $\gg$, then $\gg_0$ contains a Cartan subalgebra $\h$ of $\gg$ (see \cite[\S~26.3]{Tim11}), and so $\gg_1$ is an $\h$-stable complement of $\gg_0$ in $\gg$. 
Since the root subspaces of $\gg$ are $1$-dimensional, the $\h$-submodule $\gg_1$ is necessarily the sum of all the root subspaces not contained in $\gg_0$. Therefore $\gg_1$ is uniquely determined by $\gg_0$, and so $\Theta$ is uniquely determined by $\gg^\Theta$.

If the involution $\Theta$ is an outer automorphism of $\gg$ (only possible for ADE type), then $\gg_1$ is an irreducible $\gg_0$-submodule of $\gg$ by \cite[Prop.~3.1]{Kac80}. If the $\gg_0$-submodule $\gg_0$ of $\gg$ does not contain a summand isomorphic to $\gg_1$, then there is a unique $\gg_0$-stable complement of $\gg_0$ in $\gg$, and so this complement must be $\gg_1$. Using the classification of symmetric spaces given in \cite[Table~26.3]{Tim11} we verify case by case that this is indeed the case.
\end{proof}

\begin{remark}\label{rk:G not reductive} \item
\begin{enumerate}[(1)]
\item 
Prop.~\ref{prop:fixed locus of an involution} is true more generally for a connected reductive algebraic group $L$ whose center $Z(L)$ has dimension at most $1$. But it is not true  if $\dim(Z(L)) \geq 2$. 
Consider  for instance $L=Z(L)=\G_m^2$, and the group involutions $\theta_1(x,y)=(y,x)$ and $\theta_2(x,y)=(x^2y^{-1},x^3y^{-2})$. Then $L^{\theta_1}=L^{\theta_2}=\{(t,t), t \in \G_m\}$ but $\theta_1 \neq \theta_2$.
\item If $\theta_1, \theta_2$ are two group involutions on a connected reductive algebraic group $L$ such that $L^{\theta_1}=L^{\theta_2}$, then we can verify that $\theta_1$ and $\theta_2$ are conjugate in $\Aut_{gr}(L)$. Hence, for connected reductive algebraic groups, the fixed locus determines the conjugacy class of a group involution.
\end{enumerate}
\end{remark}

The next result was proved by Akhiezer and Cupit-Foutou in \cite[Th.~4.4]{ACF14}, but stated for a split real group structure on $G$. It was then generalized by Snegirov in \cite[Th.~1.1]{Sne20} for quasi-split group structures over a large field of characteristic zero (note that $\R$ is such a large field), but then the proof is more technical. For sake of completeness and for the reader's convenience, we reproduce their proof in our setting.

\begin{proposition} \label{prop:HcongugateH'}
If $H \subseteq G$ is a spherical subgroup satisfying $N_G(N_G(H))=N_G(H)$ and $\sigma_{qs}(H)=gHg^{-1}$ for some $g \in G$, then there exists a subgroup $H' \subseteq G$ conjugate to $H$ such that $\sigma_{qs}(H')=H'$. 
\end{proposition}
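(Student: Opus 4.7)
The plan is to convert the problem into a fixed-point question for an antiregular involution on the spherical homogeneous space $Y := G/N_G(H)$, and then use the wonderful compactification of $Y$ afforded by the self-normalization hypothesis.

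Applying $\sigma_{qs}$ to the identity $\sigma_{qs}(H) = gHg^{-1}$ gives both $\sigma_{qs}(N_G(H)) = g\,N_G(H)\,g^{-1}$ and $\sigma_{qs}(g)\,g \in N_G(H)$. These two relations ensure that the formula $\mu(xN_G(H)) := \sigma_{qs}(x)\,g\,N_G(H)$ defines an antiregular involution $\mu$ on $Y$, and a direct check shows that a point $hN_G(H) \in Y$ is $\mu$-fixed exactly when $h^{-1}\sigma_{qs}(h)\,g \in N_G(H)$, equivalently when $H' := hHh^{-1}$ is $\sigma_{qs}$-stable. The statement thus reduces to producing a $\mu$-fixed point on $Y$.

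Since $H$ (hence $N_G(H)$) is spherical and $N_G(H)$ is self-normalizing, by Knop--Luna theory the variety $Y$ admits a \emph{wonderful} $G$-equivariant compactification $X$: a smooth projective $G$-variety with a unique closed orbit $Z \iso G/P$ for some parabolic $P \subseteq G$. By the $G$-equivariant uniqueness of $X$, the involution $\mu$ extends to an antiregular involution $\widetilde\mu$ of $X$; since $\widetilde\mu$ permutes the finitely many $G$-orbits of $X$, it stabilizes the closed orbit $Z$. The restriction $\widetilde\mu|_Z$ is a $(G,\sigma_{qs})$-equivariant real structure on the flag variety $G/P$, and a standard Borel--Serre-style argument (available because $\sigma_{qs}$ is quasi-split, so every $\sigma_{qs}$-invariant conjugacy class of parabolics contains a $\sigma_{qs}$-stable representative) produces a real point $z_0 \in Z$.

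Finally, I would descend from $z_0 \in Z$ to a $\mu$-fixed point in $Y$ using the local structure theorem for wonderful varieties. Near $z_0$ there is a $\widetilde\mu$-stable affine open neighborhood $U$ whose transverse slice to $Z$ is isomorphic to $\C^r$ (with $r = \mathrm{rk}(X)$) as a $T$-module, in which $U \cap Z$ is the origin and $U \cap Y$ is the complement of the coordinate hyperplanes, i.e. $(\C^*)^r$. The antiregular involution $\widetilde\mu$ acts on this slice as an antiregular involution whose real fixed locus is, after a suitable change of coordinates, the standard real form $\R^r \subseteq \C^r$, which visibly meets $(\R^*)^r \subseteq (\C^*)^r$. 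Any point in this intersection gives the required conjugator $h$.

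The main obstacle is the last two steps: extending $\mu$ algebraically to the wonderful compactification requires invoking the uniqueness of $X$ in a $\Gamma$-equivariant form, and pinning down the local model around $Z$ tightly enough to guarantee that the real fixed locus of $\widetilde\mu$ actually reaches the open orbit. This is precisely where the self-normalization hypothesis $N_G(N_G(H)) = N_G(H)$ is used in full.
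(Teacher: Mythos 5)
Your proposal follows the paper's proof almost step for step up to the last stage: the reduction to a $(G,\sigma_{qs})$-equivariant real structure $\mu$ on $Y=G/N_G(H)$ (using $\sigma_{qs}(g)g\in N_G(H)$, which you derive correctly), the passage to the wonderful compactification of $Y$ (this is where $N_G(N_G(H))=N_G(H)$ enters, via \cite[Cor.~7.2]{Kno96}), the extension of $\mu$ to the compactification, and the production of a fixed point on the closed orbit $G/P$ from quasi-splitness of $\sigma_{qs}$ are all exactly the paper's argument. (The paper phrases the extension step via $\Gamma$-stability of the colored cone $(\V,\emptyset)$ and \cite[Th.~9.1]{Wed18} rather than via uniqueness of the wonderful compactification; these are the same mechanism.) Your characterization of $\mu$-fixed points of $Y$ as the cosets $hN_G(H)$ with $\sigma_{qs}(hHh^{-1})=hHh^{-1}$ is also correct and matches the paper's concluding computation.

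The one place you genuinely diverge is the descent from the closed orbit to the open orbit, and it is also the one place where your argument has a real gap --- one you partly acknowledge. The paper finishes in a single line: the compactification is smooth and has a real point, so by \cite[Cor.~2.2.10]{Man17} the set of $\mu$-fixed points is Zariski dense and therefore meets the open orbit $Y$. Your route through the local structure theorem is plausible but not justified as written: the affine chart around the closed orbit is only $B$-stable for a chosen Borel $B$, so you must arrange $\sigma_{qs}(B)=B$ and a compatible ($\widetilde\mu$-stable) choice of base point, Levi, and slice before you can speak of ``the'' antiregular involution on $\C^r$; and even then, the claim that the real form $\R^r$ meets $(\C^*)^r$ is ultimately a Zariski-density statement for real forms of smooth varieties --- i.e.\ you would be re-proving, in coordinates and with extra equivariance bookkeeping, the general fact the paper simply cites. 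I recommend replacing your final paragraph by the appeal to smoothness of the wonderful compactification together with density of the real locus at a smooth real point; that closes the argument and makes the role of the self-normalization hypothesis transparent (it buys the wonderful compactification and the identification $N_G(N_G(H))=N_G(H)$ used to read off $\sigma_{qs}$-stability of $H'$ at the end, not the local analysis near the closed orbit).
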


\begin{proof}
(For this proof, and this proof only, the reader is assumed to be a bit familiar with the theory of equivariant embeddings for spherical homogeneous spaces; see \cite{Kno91} for an exposition.) 

Let $N=N_G(H)$. The condition $\sigma_{qs}(H)=gHg^{-1}$ implies that $\sigma_{qs}(N)=gNg^{-1}$. Hence, by Lem.~\ref{lem: two conditions} and Rk.~\ref{rk:N(H)=H}, the $G$-variety $Y=G/N$ has a ($G,\sigma_{qs}$)-equivariant real structure that we denote by $\mu$. 
Also, this real structure is unique since $\Aut_\C^G(Y) \iso N_G(N)/N = \{1\}$. 

By \cite[Cor.~7.2]{Kno96} the variety $Y$ admits a wonderful compactification $\overline{Y}$, which is smooth, projective (see \cite[Prop.~3.18]{Avd15}), and has a unique closed orbit $Y_0$, which is therefore a flag variety $G/P$. The colored fan of the $G$-equivariant embedding $Y \hookrightarrow \overline{Y}$ is determined by the cone ($\V,\emptyset$), where $\V$ is the valuation cone of $Y$, which is stable for the $\Gamma$-action on the set of colored cones induced by $\sigma_{qs}$ (see \cite{Hur11} or \cite{Wed18} for details on this $\Gamma$-action). Hence, the equivariant real structure $\mu$ on $Y$ extends on $\overline{Y}$ by \cite[Th.~9.1]{Wed18} (see also \cite[Th.~2.23]{Hur11}). 

The restriction $\mu_0=\mu_{|Y_0}$ is a $(G,\sigma_{qs})$-equivariant real structure on $Y_0=G/P$. Thus, by Lem.~\ref{lem: two conditions}, the parabolic subgroups $\sigma_{qs}(P)$ and $P$ are conjugate. By \cite[Prop.~3.9]{MJT18}, there exists a parabolic subgroup $P'$ conjugate to $P$ such that $\sigma_{qs}(P')=P'$ (since $\sigma_{qs}$ is quasi-split). Hence, $\mu'_0(kP')=\sigma_{qs}(k)P'$ is a $(G,\sigma_{qs})$-equivariant real structure on $Y_0$, equivalent to $\mu_0$, with a fixed point $eP'$. Thus $\mu_0$ has a fixed point in $Y_0$; in particular, $\mu$ has a fixed point in $\overline{Y}$. By \cite[Cor.~2.2.10]{Man17}, since $\overline{Y}$ is smooth, the set of $\mu$-fixed points is Zariski dense in $\overline{Y}$, and so $\mu$ has a fixed point in the open orbit $Y=G/N$.
 
Let $g_0N$ be a $\mu$-fixed point in $Y$. Let $\sigma'_{qs}=\inn_{g_0}^{-1} \circ \sigma_{qs} \circ \inn_{g_0}$, and let $\mu'$ be the $(G,\sigma'_{qs})$-equivariant real structure defined by $\mu'(kN)=g_0^{-1}\mu(g_0kN)$. Then $\mu'(eN)=eN$, and computing the stabilizers on both sides yields $\sigma'_{qs}(N)=N$. Thus 
\[ N=\sigma'_{qs}(N)=g_0^{-1}\sigma_{qs}(g_0)\sigma_{qs}(N)\sigma_{qs}(g_0^{-1}) g_0=g_0^{-1}\sigma_{qs}(g_0)gNg^{-1}\sigma_{qs}(g_0^{-1}) g_0\]
and so $g_0^{-1}\sigma_{qs}(g_0)g \in N_G(N)=N$. 
It follows  that $\sigma_{qs}(H')=H'$, where $H'=g_0Hg_{0}^{-1}$. This concludes the proof.
\end{proof}

\begin{remark}
As remarked by Avdeev in \cite{Avd13}, it is not true that $N_L(N_L(H))=N_L(H)$ for any spherical subgroup $H$ of a connected reductive algebraic group $L$. Here we give a simple counter-example pointed to us by Bart Van Steirteghem. Let 
\[L=\GL_2 \text{ and } H=\left \{\begin{bmatrix}
a & 0 \\ 0 & 1
\end{bmatrix} \text{ with } a \in \C^* \right \}.\] 
Then $N_L(H)=\left \{\begin{bmatrix}
a & 0 \\ 0 & b
\end{bmatrix} \text{ with } a,b \in \C^* \right \}$ and $N_L(N_L(H))/N_L(H)\iso \Z/2\Z$.
\end{remark}

We thank Jacopo Gandini for pointing to us that if $H \subseteq G$ is a spherical subgroup such that $N_G(H)/H$ is finite, then $N_G(N_G(H))=N_G(H)$. This is crucial in the proof of the following result.

\begin{corollary} \label{cor:H conjugate for symmetric subgroups}
Let $H$ be a symmetric subgroup of $G$ such that $\sigma_{qs}(H)$ is conjugate to $H$. 
Then there exists a subgroup $H' \subseteq G$ conjugate to $H$ such that $\sigma_{qs}(H')=H'$. 
Equivalently, there exists $\sigma'_{qs}$ conjugate to $\sigma_{qs}$ by an inner automorphism of $G$ such that $\sigma'_{qs}(H)=H$.
\end{corollary}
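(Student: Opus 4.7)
The strategy is to reduce the statement to a direct application of Proposition \ref{prop:HcongugateH'}. For that, I need to verify the two hypotheses of that proposition for a symmetric subgroup $H$: first that $H$ is spherical, and second that $N_G(N_G(H))=N_G(H)$.

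The first point is immediate from Vust's theorem (recalled at the start of Proposition \ref{prop:H abelian and finite}): symmetric subgroups of a semisimple group are spherical. For the second point, the key ingredient is the remark attributed to Jacopo Gandini just before the statement, namely that for a spherical subgroup $H \subseteq G$ with $N_G(H)/H$ finite one has $N_G(N_G(H))=N_G(H)$. Since Proposition \ref{prop:H abelian and finite} guarantees that for any symmetric subgroup $H$ the quotient $N_G(H)/H$ is abelian and finite, this hypothesis applies. Combined with the assumption $\sigma_{qs}(H)=gHg^{-1}$, Proposition \ref{prop:HcongugateH'} then yields a subgroup $H' \subseteq G$ conjugate to $H$ with $\sigma_{qs}(H')=H'$, which is the first assertion.

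It remains to establish the equivalence between the two formulations. If $H'=aHa^{-1}$ satisfies $\sigma_{qs}(H')=H'$, set
\[\sigma'_{qs}=\inn_{a^{-1}} \circ \sigma_{qs} \circ \inn_a.\]
A direct computation gives $\sigma'_{qs}=\inn_{a^{-1}\sigma_{qs}(a)} \circ \sigma_{qs}$, so $\sigma'_{qs}$ is an inner twist of $\sigma_{qs}$ (and is still an antiregular group involution, hence a real group structure on $G$), and
\[\sigma'_{qs}(H)=a^{-1}\sigma_{qs}(aHa^{-1})a=a^{-1}\sigma_{qs}(H')a=a^{-1}H'a=H.\]
The converse direction is the same computation read backwards. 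This concludes the proof.

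Since all the technical work has already been done in Proposition \ref{prop:HcongugateH'}, I do not expect any genuine obstacle here; the only thing to be careful about is the bookkeeping showing that the two equivalent formulations really match (and in particular that the constructed $\sigma'_{qs}$ is indeed an inner twist of $\sigma_{qs}$, not merely conjugate to it in $\Aut_{gr}(G)$ by an arbitrary automorphism).
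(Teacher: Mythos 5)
Your proposal is correct and follows essentially the same route as the paper: check sphericality via Vust, check $N_G(N_G(H))=N_G(H)$, apply Proposition~\ref{prop:HcongugateH'}, and translate between the two formulations via $\sigma'_{qs}=\inn_{a^{-1}}\circ\sigma_{qs}\circ\inn_a$. The only difference is that where you invoke the Gandini observation as a black box, the paper's proof actually supplies the short argument for $N_G(N_G(H))=N_G(H)$ (set $K=N_G(H^0)$, use that $K/H^0$ is abelian to conclude $K=N_G(H)$, then use finiteness of $N_G(H)/H$ to get $K^0=H^0$ and hence $N_G(K)\subseteq N_G(K^0)=K$), so a fully self-contained write-up should include that verification rather than cite the unproved remark.
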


\begin{proof}
Since symmetric subgroups are spherical, it suffices to verify that $N_G(H)=\newline N_G(N_G(H))$ and then to apply Prop.~\ref{prop:HcongugateH'}.

Denoting $K = N_G(H^0)$, we have the inclusions $H^0 \subseteq H \subseteq N_G(H) \subseteq K$.
Then $H$ is a normal subgroup of $K$ if and only if $H/H^0$ is a normal subgroup of $K/H^0$, which is true because $H^0$ is a spherical subgroup of $G$ (since $H$ is spherical), and so $K/H^0$ is an abelian group (see \cite[\S~5.2]{BP87} or \cite[Th.~6.1]{Kno91}). Hence, $K \subseteq N_G(H)$, which yields $K=N_G(H)$. 
As $H$ is a symmetric subgroup of a semisimple algebraic group, the group $N_G(H)/H$ is finite (Prop.~\ref{prop:H abelian and finite}). Therefore $H^0$ has finite index in $K$. It follows that $K^0=H^0$, and thus $N_G(K) \subseteq N_G(K^0)=N_G(H^0)=K$.

Finally, $H'=cHc^{-1}$ satisfies $\sigma_{qs}(H')=H'$ if and only if $\sigma'_{qs}=\inn_{c}^{-1} \circ \sigma_{qs} \circ \inn_c$ satisfies $\sigma'_{qs}(H)=H$, which proves the last statement of the corollary.
\end{proof}

Before stating Th.~\ref{th:main result existence}, which is the main result of this \S~\ref{sec:existence}, we need to define the action of the Galois group $\Gamma=\Gal(\C/\R)$ on $N_G(G^\theta)/G^\theta$.

\begin{definition} \label{def:Gamma-action}
Let $\sigma=\inn_c \circ \sigma_{qs}$ be a real group structure on $G$.
If $\leftexp{\sigma}\theta \sim \theta$, then $\sigma_{qs}(G^\theta)=gG^\theta g^{-1}$ for some $g \in G$. Hence, by Cor.~\ref{cor:H conjugate for symmetric subgroups}, there exists a quasi-split real group structure $\sigma'_{qs}$, equivalent to $\sigma_{qs}$, such that $\sigma'_{qs}(G^\theta)=G^\theta$. Then $\sigma'_{qs}(N_G(G^\theta))=N_G(G^\theta)$, and so $\sigma'_{qs}$ induces a real group structure $\tau$ on $N_G(G^\theta)/G^\theta$ defined by $\tau(nG^\theta)=\sigma'_{qs}(n)G^\theta$. The $\Gamma$-action on $N_G(G^\theta)/G^\theta$ that we will consider in the following is the one given by $\tau$. (Note that this $\Gamma$-action does not depend on the choice of $\sigma'_{qs}$ in the conjugacy class of $\sigma_{qs}$ by inner automorphisms.)
\end{definition}

\begin{theorem} \label{th:main result existence} 
Let $\sigma=\inn_c \circ \sigma_{qs}$ be a real group structure on $G$.
Let $\theta$ be a group involution on $G$ and let $G^\theta \subseteq H \subseteq N_G(G^\theta)$ be a symmetric subgroup. Then the following four conditions are equivalent:
\begin{enumerate}
\item  \label{item main th i} $G/H$ has a $(G,\sigma_{qs})$-equivariant real structure;
\item  \label{item main th ii} $\leftexp{\sigma}\theta \sim \theta$ and the $\Gamma$-action on $N_G(G^\theta)/G^\theta$ of Def.~\ref{def:Gamma-action} stabilizes $H/G^\theta$; 
\item  \label{item main th iii} $H$ is conjugate to $\sigma_{qs}(H)$;
\item  \label{item main th iv} $H$ is conjugate to $\sigma(H)$.
\end{enumerate}
Moreover $G/H$ has a $(G,\sigma)$-equivariant real structure if and only if the (equivalent) conditions \ref{item main th i}-\ref{item main th iv} are satisfied and $\Delta_H(\sigma)=0$ with $\Delta_H$ the map defined by \eqref{eq:map Delta}.
\end{theorem}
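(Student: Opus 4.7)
The plan is to prove the chain of equivalences $(i) \Leftrightarrow (iii) \Leftrightarrow (iv)$, then $(ii) \Leftrightarrow (iii)$, and finally to deduce the cohomological refinement from Prop.~\ref{prop:coho condition}(ii).

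The implications $(iii) \Leftrightarrow (iv)$ and $(i) \Rightarrow (iii)$ are immediate: the former because $\sigma(H) = c\sigma_{qs}(H)c^{-1}$, the latter by Lem.~\ref{lem: two conditions} applied to $\sigma_{qs}$. Conversely, for $(iii) \Rightarrow (i)$, I would apply Cor.~\ref{cor:H conjugate for symmetric subgroups} to produce $H'$ conjugate to $H$ with $\sigma_{qs}(H') = H'$, then use Prop.~\ref{prop:coho condition}(i) (valid since $N_G(H')/H'$ is abelian by Prop.~\ref{prop:H abelian and finite}) to obtain a $(G, \sigma_{qs})$-equivariant real structure on $G/H'$, which transfers to $G/H$ via Rk.~\ref{rk:conjugate}.

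The core of the theorem is the equivalence $(ii) \Leftrightarrow (iii)$. For $(ii) \Rightarrow (iii)$: by Def.~\ref{def:Gamma-action}, the hypothesis $\leftexp{\sigma}\theta \sim \theta$ furnishes a quasi-split real structure $\sigma'_{qs}$, inner-conjugate to $\sigma_{qs}$, satisfying $\sigma'_{qs}(G^\theta) = G^\theta$; the stabilization of $H/G^\theta$ by the induced $\Gamma$-action then reads $\sigma'_{qs}(H) = H$, so $\sigma_{qs}(H)$ is conjugate to $H$. I expect the reverse implication $(iii) \Rightarrow (ii)$ to be the main obstacle. Starting from $(iii)$, Cor.~\ref{cor:H conjugate for symmetric subgroups} yields $\sigma'_{qs}$ inner-conjugate to $\sigma_{qs}$ with $\sigma'_{qs}(H) = H$, and the task is to upgrade this to $\sigma'_{qs}(G^\theta) = G^\theta$; this will simultaneously give $\leftexp{\sigma}\theta \sim \theta$ and the $\Gamma$-stabilization of $H/G^\theta$. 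The key observation is that $\sigma'_{qs}$ preserves $H^0$, and $H^0 = (G^\theta)^0$ because $G^\theta$ has finite index in $H$ by Prop.~\ref{prop:H abelian and finite}. At the Lie algebra level, the differential $D_e \sigma'_{qs}$ thus preserves the fixed subspace of $D_e\theta$, so conjugating $D_e\theta$ by $D_e\sigma'_{qs}$ produces a $\C$-linear involution on $\gg$ with the same fixed subalgebra as $D_e\theta$. Applying Prop.~\ref{prop:fixed locus of an involution} at the Lie-algebra level (as in its proof) forces this conjugate to equal $D_e\theta$, so $D_e\sigma'_{qs}$ commutes with $D_e\theta$; integrating on the connected group $G$ yields $\leftexp{\sigma'_{qs}}\theta = \theta$, whence $\sigma'_{qs}(G^\theta) = G^\theta$.

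Finally, assuming $(i)$--$(iv)$ hold, I would invoke Cor.~\ref{cor:H conjugate for symmetric subgroups} once more to replace $\sigma_{qs}$ by an inner conjugate $\sigma'_{qs}$ with $\sigma'_{qs}(H) = H$. Then $\sigma$ can be rewritten as $\inn_{c'} \circ \sigma'_{qs}$ for some $c' \in G$, and Prop.~\ref{prop:coho condition}(ii) directly yields the equivalence between the existence of a $(G, \sigma)$-equivariant real structure on $G/H$ and $\Delta_H(\sigma) = 0$.
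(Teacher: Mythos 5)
Your proposal is correct and follows essentially the same route as the paper: the same decomposition into the equivalences (i)$\Leftrightarrow$(iii)$\Leftrightarrow$(iv) and (ii)$\Leftrightarrow$(iii), with Lem.~\ref{lem: two conditions}, Cor.~\ref{cor:H conjugate for symmetric subgroups}, Prop.~\ref{prop:fixed locus of an involution} and Prop.~\ref{prop:coho condition} playing the same roles. The only (harmless) divergence is in (iii)$\Rightarrow$(ii), where you run the fixed-locus argument at the Lie-algebra level on $\leftexp{D_e\sigma'_{qs}}{(D_e\theta)}$ rather than following the paper's reduction to the simply-connected cover and comparison of $G^{\leftexp{\sigma}{\theta}}$ with $G^{\leftexp{\inn_c}{\theta}}$ at the group level; since the Lie-algebra statement is exactly what the proof of Prop.~\ref{prop:fixed locus of an involution} actually establishes, this variant is legitimate.
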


\begin{proof}
The equivalence of \ref{item main th i} and \ref{item main th iii} follows from Lem.~\ref{lem: two conditions} and Cor.~\ref{cor:H conjugate for symmetric subgroups}. Indeed, if $H$ is conjugate to $\sigma_{qs}(H)$, then we can find $H'$ conjugate to $H$ such that $\sigma_{qs}(H')=H'$. By Rk.~\ref{rk:conjugate}, we can replace $H$ by $H'$ and then the two conditions of Lem.~\ref{lem: two conditions} are satisfied with $g=1$.

The equivalence of \ref{item main th iii} and \ref{item main th iv} follows from the fact that $\sigma=\inn_c \circ \sigma_{qs}$.

We now prove the equivalence of \ref{item main th ii} and \ref{item main th iii}. 
By Prop.~\ref{prop:H abelian and finite} the group $G^\theta$ is connected and $N_G(G^\theta)^0=H^0=G^\theta$. 
Also, by Rk.~\ref{rk:conjugate} and Cor.~\ref{cor:H conjugate for symmetric subgroups} we can replace  \ref{item main th iii} by the condition \ref{item main th iii}' given by  $\sigma_{qs}(H)=H$. (This boils down to conjugate $\theta$ by some inner automorphism.) Then
\begin{align*}
  \sigma_{qs}(H^0)=H^0 \Leftrightarrow \sigma(H^0)=cH^0c^{-1} &\Leftrightarrow \ \sigma(G^\theta)=cG^\theta c^{-1}\\ 
                                                                                   &\Leftrightarrow \ G^{\,^\sigma\hskip-1pt \theta}=G^{\,^{{\rm inn}_c}\theta
} \\
                                                                          & \Leftrightarrow \ \leftexp{\sigma}{\theta}=\leftexp{\inn_c}{\theta}  \ \Leftrightarrow \ \leftexp{\sigma}{\theta} \sim \theta,  
\end{align*}
where the penultimate equivalence comes from Prop.~\ref{prop:fixed locus of an involution}. 
Also,  $\sigma_{qs}(G^\theta)=G^\theta$ implies that $\sigma_{qs}(N_G(G^\theta))=N_G(G^\theta)$, and so the $\Gamma$-action on $N_G(G^{\theta})/G^{\theta}$ stabilizes $H/G^\theta$ if and only if $\sigma_{qs}(H)=H$. This finishes to prove the equivalence \ref{item main th ii} $\Leftrightarrow$ \ref{item main th iii}.

Finally, the last claim of the theorem follows from Prop.~\ref{prop:coho condition}.
\end{proof}

\begin{remark} \label{rk:cyclic}
If $N_G(G^\theta)/G^\theta$ is a cyclic group, then the $\Gamma$-action stabilizes each subgroup of $N_G(G^\theta)/G^\theta$, which simplifies the condition \ref{item main th ii} in Th.~\ref{th:main result existence}. 
\end{remark}

\begin{corollary}
Let $\sigma_{qs}$ and $\theta$ be as in Th.~\ref{th:main result existence}. Then $G/G^\theta$ has a $(G,\sigma_{qs})$-equivariant real structure if and only if $G/N_G(G^\theta)$ does, and this is the case if and only if $\leftexp{\sigma}\theta\sim \theta$.
\end{corollary}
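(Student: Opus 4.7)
The plan is to specialize Theorem~\ref{th:main result existence} to the two extreme choices of symmetric subgroup, namely $H = G^\theta$ and $H = N_G(G^\theta)$, and to observe that the criterion in both cases collapses to the single condition $\leftexp{\sigma}\theta \sim \theta$. I would frame the argument entirely as a reduction to condition~\ref{item main th ii} of the theorem (existence of a $(G,\sigma_{qs})$-equivariant structure), since the cohomological obstruction $\Delta_H(\sigma)$ does not enter when one only asks about $\sigma_{qs}$-equivariant structures.

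The key observation is that the second clause of condition~\ref{item main th ii} — that the $\Gamma$-action on $N_G(G^\theta)/G^\theta$ stabilize $H/G^\theta$ — is automatic for both choices of $H$. Indeed, for $H = G^\theta$ the subgroup $H/G^\theta$ is trivial, and for $H = N_G(G^\theta)$ the subgroup $H/G^\theta$ is the entire ambient group $N_G(G^\theta)/G^\theta$; either is preserved by every group automorphism. Hence the content of condition~\ref{item main th ii} reduces in both cases to the first clause $\leftexp{\sigma}\theta \sim \theta$.

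Applying Theorem~\ref{th:main result existence} then yields, for each of $H = G^\theta$ and $H = N_G(G^\theta)$, the equivalence: $G/H$ has a $(G,\sigma_{qs})$-equivariant real structure if and only if $\leftexp{\sigma}\theta \sim \theta$. This proves both that the two existence statements are equivalent, and that each is equivalent to the stated criterion. A final bookkeeping remark is that, writing $\sigma = \inn_c \circ \sigma_{qs}$, the automorphisms $\leftexp{\sigma}\theta$ and $\leftexp{\sigma_{qs}}\theta$ differ by conjugation by an inner automorphism of $G$, so $\leftexp{\sigma}\theta \sim \theta$ is the same condition as $\leftexp{\sigma_{qs}}\theta \sim \theta$; thus one can state the criterion indifferently with $\sigma$ or $\sigma_{qs}$.

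I do not foresee any real obstacle: the corollary is a direct specialization of Theorem~\ref{th:main result existence}. The only point deserving explicit mention is the verification that for the two extreme subgroups $G^\theta$ and $N_G(G^\theta)$ the stability clause in \ref{item main th ii} is trivially satisfied, which is the step that makes the characterization collapse to a single condition on the involution $\theta$.
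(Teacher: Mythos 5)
Your proposal is correct and is essentially the paper's own argument: the paper likewise notes that for $H=G^\theta$ or $H=N_G(G^\theta)$ the $\Gamma$-action on $N_G(G^\theta)/G^\theta$ trivially stabilizes $H/G^\theta$, and concludes via the equivalence of conditions \ref{item main th i} and \ref{item main th ii} of Th.~\ref{th:main result existence}. Your closing remark that $\leftexp{\sigma}\theta\sim\theta$ and $\leftexp{\sigma_{qs}}\theta\sim\theta$ coincide is a harmless extra clarification consistent with the theorem's statement.
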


\begin{proof}
If $H=G^\theta$ or $H=N_G(G^\theta)$, then the $\Gamma$-action on $N_G(G^\theta)/G^\theta$ trivially stabilizes $H/G^\theta$, and so the result follows from the equivalence of \ref{item main th i} and \ref{item main th ii} in Th.~\ref{th:main result existence}.
\end{proof}

\begin{example} \label{ex3}
Let $G=\SL_n \times \SL_n$ with $n$ odd and $n \geq 3$, let $\sigma\colon(g_1,g_2)\mapsto (\overline{g_2},\overline{g_1})$, and let $\theta\colon(g_1,g_2)\mapsto (\leftexp{t}{g}_{1}^{-1},\leftexp{t}{g}_{2}^{-1})$. Then $\leftexp{\sigma}\theta=\theta$ and $N_G(G^\theta)/G^\theta \iso \Z/n\Z \times \Z/n\Z$ on which $\Gamma$ acts by $\gamma \cdot(a,b)=({b}^{-1},a^{-1})$.
Thus, since $\sigma$ is quasi-split (it preserves the usual Borel subgroup), it follows from Th.~\ref{th:main result existence} that there exists a $(G,\sigma)$-equivariant real structure on the symmetric space $G/H$ if and only if $(a,b) \in H/G^\theta$ implies $(b,a) \in H/G^\theta$, that is, $H/G^\theta$ is stable under the operation of exchanging  the two factors of $N_G(G^\theta)/G^\theta$.
\end{example}

\begin{example} \label{ex4}
Let $n \geq 2$ and let $G=\SL_{2n}$.
Let $\sigma$ be a real group structure on $G$ obtained by an inner twist of $\sigma_{qs}$, where $\sigma_{qs}$ is the quasi-split real group structure defined in Example~\ref{ex1}, let $\theta$ be the group involution defined in Example~\ref{ex2}, and let $G^\theta \subseteq H \subseteq N_G(G^\theta)$. Then $\leftexp{\sigma_{qs}}{\theta}=\theta$ (and so $\leftexp{\sigma}{\theta} \sim \theta$), and the $\Gamma$-action on $N_G(G^\theta)/G^\theta \iso \Z/n\Z$ stabilizes $H/G^\theta$ by Rk.~\ref{rk:cyclic}. Hence, by Th.~\ref{th:main result existence}, the symmetric space $G/H$ has a ($G,\sigma_{qs}$)-equivariant real structure. 

It remains to compute $\Delta_H(\sigma)$ to determine whether $G/H$ has a ($G,\sigma$)-equivariant real structure. Let $S=\{0,\ldots,n\}$. The equivalence classes of the real group structures on $G$ obtained as an inner twist of $\sigma_{qs}$ are in bijection with $S$. For $s \in S$, we denote by $\sigma_s$ the real group structure whose real locus $G(\C)^{\sigma_s}$ is $\SU(n+s,n-s,\R)$. 
Borovoi determined in \cite[Appendix~A, Table~2]{MJT18} that $H^2(\Gamma,Z(G)) \iso Z(G)/2Z(G) \iso \Z/2\Z$ and that $\delta(\sigma_s)= s \mod 2$. Let $\xi$ be a primitive $2n$-th root of unity. Then $H=\left\langle \xi^r I_{2n} ,G^\theta \right\rangle$, for some positive integer $r$ dividing $2n$, and $A:=N_G(H)/H \iso Z(G)/ (Z(G) \cap H) \iso \Z/t\Z$ with $t=\gcd(r,n)$. We verify that the $\Gamma$-action on $Z(G)$ (and so also on $A$) is trivial, thus 
\[H^2(\Gamma,A) \iso A/2A \iso \left\{
    \begin{array}{ll}
       \Z/2\Z& \mbox{ if $t$ is even}; \\
       \{0\} &  \mbox{ if $t$ is odd}.
    \end{array}
\right.\] 
The map $\chi_H^* : H^2(\Gamma,Z(G)) \iso Z(G)/2Z(G) \to H^2(\Gamma,A)\iso A/2A$ defined in \S~\ref{subsec1} is the map induced by the quotient map $Z(G) \to A \iso Z(G)/(Z(G) \cap H)$, hence it is the identity map if $t$ is even (resp. the trivial map if $t$ is odd). It follows that $\Delta_H(\sigma_s)=0$ if and only if $s$ is even or $t$ is odd. Therefore, $G/H$ has a ($G,\sigma_s$)-equivariant real structure if and only if $s$ is even or $t$ is odd. 
\end{example}

\section{Number of equivalence classes} \label{sec:number of eq classes}
Let $\sigma$ be a real group structure on $G$, and let $X=G/H$ be a symmetric space.
\textbf{We suppose that there exists a ($G,\sigma$)-equivariant real structure $\mu$ on $X$.} 
Then $\mu$ determines a $\Gamma$-action on $A=\Aut_\C^G(X)\iso N_G(H)/H$; indeed, the generator $\gamma$ acts on $A$ by $\mu$-conjugation. 

\begin{notation}
In this section, and contrary to the previous examples, we will follow the usual conventions and use the multiplicative notation for the group law in $A$, even if $A$ is a finite abelian group in our case by Prop.~\ref{prop:H abelian and finite}.
\end{notation}

\begin{definition} \label{def:Galois H1}
If $A$ is a $\Gamma$-group, then  the first Galois cohomology pointed set is $H^1(\Gamma,A)=Z^1(\Gamma,A)/\sim$, where $Z^1(\Gamma,A)=\{ a \in A \ | \   a^{-1}= \ga a \}$ and two elements $a_1$, $a _2 \in Z^1(\Gamma,A)$ satisfy $a_1 \sim a_2$ if $a_2=b^{-1} a_1\ga  b$ for some $b \in A$.
\end{definition}

\begin{remark}\label{rk:2 torsion}
If $A$ is an abelian group, then $H^1(\Gamma, A)$ is an abelian group.
Moreover, $a^2 =a (a^{-1})^{-1}=a(\ga a)^{-1} \sim 1$ for all $a \in Z^1(\Gamma,A)$. In the case where $H^1(\Gamma, A)$ is finite, this implies that its cardinal is a power of $2$.
\end{remark}

By \cite[Lem.~2.11]{MJT18} the set of equivalence classes of $(G,\sigma)$-equivariant real structures on $X$ is in bijection with the set $H^1(\Gamma, A)$, hence our goal in this section is to determine the cardinal of $H^1(\Gamma, A)$. 

Before stating the next result, we need some extra notation. Let $\Gamma'=\{e,\gamma'\} \iso \Z/2\Z$ acting on $A$ by $\leftexp{\gamma'}a=\leftexp{\gamma}{a}^{-1}$. (This $\Gamma'$-action is well-defined since $A$ is abelian.) For $p$ a prime number, let $A_p$ be the maximal $p$-subgroup of $A$.

\begin{proposition} \label{prop:number of structures}
We suppose that $X=G/H$ has a $(G,\sigma)$-equivariant real structure $\mu$, and we consider the actions of $\Gamma$ and $\Gamma'$ on $A$ defined above.
\begin{enumerate}
\item \label{item i} There exists $n \geq 0$ such that $H^1(\Gamma,A)\iso H^1(\Gamma,A_2)\iso (\Z/2\Z)^n$. In particular, there are $2^n$ equivalence classes of $(G,\sigma)$-equivariant real structures on $G/H$.
\item \label{item ii} The integer $n$ can be calculated explicitly as follows: $|A_2^{\Gamma'}|\cdot |A_2^\Gamma|/|A_2|=2^n$.
\end{enumerate}
\end{proposition}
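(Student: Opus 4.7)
The plan is to compute $H^1(\Gamma, A)$ directly from Def.~\ref{def:Galois H1}, exploiting the fact that $A$ is finite and abelian by Prop.~\ref{prop:H abelian and finite}. First I would decompose $A$ as a direct product $A \iso \prod_p A_p$ of its Sylow $p$-subgroups; since each $A_p$ is characteristic in $A$ it is stable under both $\Gamma$ and $\Gamma'$, so the decomposition is one of $\Gamma$-modules and yields $H^1(\Gamma, A) \iso \bigoplus_p H^1(\Gamma, A_p)$. By Rk.~\ref{rk:2 torsion} every element of $H^1(\Gamma, A_p)$ has order dividing $2$, whereas $|H^1(\Gamma, A_p)|$ divides $|A_p|$, which is a power of $p$; these two constraints force $H^1(\Gamma, A_p) = 0$ for $p$ odd. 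Thus $H^1(\Gamma, A) \iso H^1(\Gamma, A_2)$ is a finite abelian group of exponent $2$, hence isomorphic to $(\Z/2\Z)^n$ for some $n \geq 0$. This proves \ref{item i}, the bijection with equivalence classes of $(G,\sigma)$-equivariant real structures being already recorded in \cite[Lem.~2.11]{MJT18}.

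For \ref{item ii}, I would compute $|H^1(\Gamma, A)|$ via explicit cycle/coboundary counts. The defining condition $a^{-1} = \ga a$ for $Z^1(\Gamma, A)$ is equivalent to $a = (\ga a)^{-1} = \leftexp{\gamma'}{a}$, which identifies $Z^1(\Gamma, A) = A^{\Gamma'}$. Since $A$ is abelian, the equivalence relation of Def.~\ref{def:Galois H1} simplifies to $a_1 \sim a_2$ if and only if $a_1^{-1} a_2 \in B^1(\Gamma, A)$, where $B^1(\Gamma, A) := \{ b^{-1}\, \ga b : b \in A \}$ is the image of the group homomorphism $d \colon A \to A$, $b \mapsto b^{-1}\, \ga b$. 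Its kernel is $A^{\Gamma}$, so $|B^1(\Gamma, A)| = |A|/|A^{\Gamma}|$, and therefore
\[
|H^1(\Gamma, A)| \;=\; \frac{|Z^1(\Gamma, A)|}{|B^1(\Gamma, A)|} \;=\; \frac{|A^{\Gamma'}|\cdot |A^{\Gamma}|}{|A|}.
\]
The same formula applied to the $\Gamma$- and $\Gamma'$-stable subgroup $A_2$, combined with \ref{item i}, yields $|A_2^{\Gamma'}|\cdot |A_2^\Gamma|/|A_2| = |H^1(\Gamma, A_2)| = 2^n$.

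The argument is essentially a direct cohomological computation and I do not foresee a substantial obstacle. The only points that will need a routine verification are that the Sylow decomposition of $A$ is respected by both the $\Gamma$- and $\Gamma'$-actions (immediate, since the Sylow subgroups of a finite abelian group are characteristic), and that $B^1(\Gamma, A)$ is actually a subgroup of $Z^1(\Gamma, A)$ so that the formula $H^1(\Gamma, A) = Z^1(\Gamma, A)/B^1(\Gamma, A)$ makes sense as a group quotient — a direct check using that $A$ is abelian and $\gamma^2 = 1$.
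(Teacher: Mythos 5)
Your proposal is correct and follows essentially the same route as the paper: decompose $A$ into its Sylow $p$-parts, use the $2$-torsion observation of Rk.~\ref{rk:2 torsion} to kill the odd primes, and count $|H^1|=|Z^1|/|B^1|$ via the homomorphism $b\mapsto b^{-1}\,\ga b$ with kernel $A^\Gamma$ and the identification $Z^1(\Gamma,A)=A^{\Gamma'}$. The only (immaterial) differences are that the paper works with $A_2$ from the start in part (ii) and writes the coboundary map as $a\mapsto a\cdot\ga a^{-1}$, which gives the same subgroup since $A$ is abelian.
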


\begin{proof}
\ref{item i}: In our situation, the group $A$ is a finite abelian group (Prop.~\ref{prop:H abelian and finite}). Hence, $A$ is isomorphic to a finite product of abelian $p$-groups  $A \iso \prod_p A_p$, and each $A_p$ is $\Gamma$-stable since $\Gamma=\Gal(\C/\R)$ acts on $A$ by group involution. Thus $H^1(\Gamma,A)=\prod_p H^1(\Gamma,A_p)$. But each $H^1(\Gamma,A_p)$ is itself an abelian $p$-group (by definition of the Galois cohomology), and since every element of $H^1(\Gamma,A)$ is $2$-torsion (Rk.~\ref{rk:2 torsion}), we see that $H^1(\Gamma,A_p)=\{1\}$ if $p \neq 2$. Therefore 
\[H^1(\Gamma,A) \iso H^1(\Gamma,A_2) \iso (\Z/2\Z)^n \text{ for some } n \geq 0.\]

\noindent \ref{item ii}: In order to calculate $n$ (or, more precisely, $2^n$), we consider certain subgroups of $A_2$. Let us note that $Z:=Z^1(\Gamma,A_2)=A_2^{\Gamma'}$ is a subgroup of $A_2$, and $H^1(\Gamma,A)= H^1(\Gamma, A_2)=Z/B$, where $B$ is the subgroup of $Z$ defined by $B=\{a\cdot \leftexp{\gamma}a^{-1};\, a \in A_2\}$. The map $\varphi:A_2\to B$ given by $\varphi(a)=a\cdot \leftexp{\gamma}a^{-1}$ is a surjective group homomorphism (since $A_2$ is abelian). The kernel is exactly $A_2^\Gamma$. Thus, the cardinality of $H^1(\Gamma,A_2)$ is given by $|A_2^{\Gamma'}|/|B|$, and $|B|=|A_2|/|A_2^\Gamma|$. This proves the result.
\end{proof}

\begin{remark}
It is easy to  give an upper-bound for $n$. Suppose  that $A_2$ is a product of $r$ cyclic groups. Then $Z=Z^1(\Gamma,A_2)$ is a  subgroup of $A_2$, and therefore a product of $r'$ cyclic groups, where $r'\le r$. In particular, $H^1(\Gamma,A_2)$ is  a quotient of the group $Z/Z^2\iso (\Z/2\Z)^{r'}$. This shows that $n\le r'\le r$. 
\end{remark}

\begin{corollary}\label{cor:number-cyclic} 
Suppose that $X=G/H$ has a $(G,\sigma)$-equivariant real structure and that $A=N_G(H)/H$ is cyclic of order $m$. If $m$ is odd, then the real structure is unique up to equivalence, and if $m$ is even, there are exactly $2$ inequivalent  real structures on $X$. 
\end{corollary}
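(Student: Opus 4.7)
The proof will follow from Proposition~\ref{prop:number of structures} applied to the cyclic group $A$ of order $m$. Write $m = 2^k m'$ with $m'$ odd; then the Sylow $2$-subgroup $A_2$ is cyclic of order $2^k$. By part~(i) of Proposition~\ref{prop:number of structures}, the number of inequivalent $(G,\sigma)$-equivariant real structures on $X$ is $|H^1(\Gamma,A)| = |H^1(\Gamma,A_2)| = 2^n$, and the remark following that proposition bounds $n \leq 1$ since $A_2$ is a product of a single cyclic factor.

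When $m$ is odd, $A_2$ is trivial, so $n = 0$, and there is a unique real structure up to equivalence.

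When $m$ is even, $A_2$ is a non-trivial cyclic $2$-group, so $n \in \{0, 1\}$; it remains to exclude $n = 0$. Using the formula $|A_2^\Gamma| \cdot |A_2^{\Gamma'}| / |A_2| = 2^n$ from part~(ii), we need to compute $|A_2^\Gamma|$ and $|A_2^{\Gamma'}|$ with respect to the $\Gamma$-action $\tau$ on $A_2$ induced by $\mu$-conjugation. The plan is to show that $\tau$ is either the identity or the inversion on $A_2$; in both cases, one of $|A_2^\Gamma|$, $|A_2^{\Gamma'}|$ equals $|A_2| = 2^k$ and the other equals the order of $A_2[2]$, namely $2$, giving $2^n = 2^k \cdot 2 / 2^k = 2$ and hence $n = 1$.

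I anticipate that the main step is establishing that the $\Gamma$-action $\tau$ on the cyclic group $A$ is necessarily $\pm \mathrm{id}$. This is not automatic from $A_2$ being cyclic, because for $k \geq 3$ the group $\Aut(\Z/2^k\Z)$ contains additional involutions which, by a direct check with the formula, would yield $n = 0$. The way to rule these out is to trace through the description of the $\Gamma$-action on $N_G(G^\theta)/G^\theta$ given in Definition~\ref{def:Gamma-action}: since this action is induced from $\sigma_{qs}$ and the hypothesis forces $A$ to be a cyclic subquotient, the involution $\tau$ must be one of the two principal cases, completing the argument.
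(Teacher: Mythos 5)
Your reduction to the $2$-primary part $A_2$ via Proposition~\ref{prop:number of structures}\ref{item i}, your treatment of the odd case, and your use of the formula $|A_2^{\Gamma'}|\cdot|A_2^{\Gamma}|/|A_2|=2^n$ all match the paper's proof. The problem is that your write-up stops at exactly the point where something has to be proved. You state, correctly, that the even case requires the $\Gamma$-action on the cyclic $2$-group $A_2$ to be either trivial or the inversion, and you observe, again correctly, that for $|A_2|=2^k$ with $k\ge 3$ the group $\Aut(\Z/2^k\Z)$ contains two further involutions (multiplication by $2^{k-1}\pm 1$), each of which gives $H^1(\Gamma,A_2)=0$ and hence only \emph{one} equivalence class. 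But the sentence meant to exclude these --- ``since this action is induced from $\sigma_{qs}$ and the hypothesis forces $A$ to be a cyclic subquotient, the involution $\tau$ must be one of the two principal cases'' --- is not an argument. A cyclic subquotient of a $\Gamma$-module can perfectly well inherit an involution other than $\pm\mathrm{id}$: on $\Z/8\Z\times\Z/8\Z$ with $\gamma\cdot(a,b)=(-b,-a)$ (the type of action that genuinely occurs for $N_G(G^\theta)/G^\theta$ when $\sigma$ interchanges two simple factors, cf.\ Example~\ref{ex3}), the subgroup $\{(a,3a)\}$ is $\Gamma$-stable and the induced action on the cyclic quotient of order $8$ is multiplication by $3$. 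So ``cyclic'' plus ``induced from $\sigma_{qs}$ as in Definition~\ref{def:Gamma-action}'' does not by itself yield the dichotomy, and your proposal supplies no further input that would. Phrases such as ``the plan is to show'' and ``I anticipate that the main step is'' signal that this step was not carried out.

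For comparison, the paper's own proof dispatches this point with the bare assertion ``There are two possible $\Gamma$-actions on $A_2$'' and then concludes exactly as you intend to (one of $A_2^{\Gamma}$, $A_2^{\Gamma'}$ is all of $A_2$ and the other is the $2$-element subgroup of square roots of $1$). So you have put your finger on the one load-bearing claim of the argument, and everything surrounding it in your proposal is sound; but to have a complete proof you must actually derive the dichotomy $\tau=\pm\mathrm{id}$ from the symmetric-space hypotheses (for instance via the embedding $N_G(G^\theta)/G^\theta\hookrightarrow Z(G)$, $g\mapsto g^{-1}\theta(g)$, together with an analysis of how $\sigma_{qs}$ acts on $Z(G)$), or else restrict to a setting where it is automatic (e.g.\ $|A_2|\le 4$, where every involution of $A_2$ is trivial or the inversion).
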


\begin{proof} If $m$ is odd, then $A_2$ is trivial, and the result holds. If $m$ is even, then $A_2$ is cyclic of order at least two. There are two possible $\Gamma$-actions on $A_2$. Either the action is trivial, or $\leftexp{\gamma}{a}=a^{-1}$ for all $a \in A$, in which case the $\Gamma'$-action is trivial. In both cases, since $A_2$ has a unique element of order two, the result holds. 
\end{proof}

\begin{example}

We saw that $G/H$ has a ($G,\sigma_s$)-equiva\-riant real structure if and only if $s$ is even or $t=\gcd(r,n)$ is odd. Moreover, the group $N_G(H)/H$ is cyclic, so Corollary \ref{cor:number-cyclic} applies. We find that the the number of equivalence classes of $(G,\sigma_s)$-equivariant real structures on $G/H$ is given by
\[|A_2^{\Gamma'}|\cdot |A_2^\Gamma|/|A_2|=|A_2^{\Gamma'}|=|\{a \in A | a^2=1\}|=\left\{
    \begin{array}{ll}
        1 & \text{ if $t$ is odd;}  \\
       2 & \text{ if $s$ and $t$ are even.}
    \end{array}
\right.\] 
\end{example}

\bigskip

\noindent \textbf{Acknowledgments.}
We are very grateful to Michael Bulois, Jacopo Gandini, and Bart Van Steirteghem for interesting exchanges related to this work. We also thank the anonymous referee for his/her helpful comments.

\def\cprime{$'$}

\end{document}